\DeclareMathOperator {\Ker}{ker}
\DeclareMathOperator {\spann}{span}
\DeclareMathOperator {\dimm}{dim}
\DeclareMathOperator {\Cl}{Cl}
\DeclareMathOperator {\divisor}{div}
\DeclareMathOperator {\WDiv}{WDiv}
\DeclareMathOperator {\Spec}{Spec}
\DeclareMathOperator {\Aut}{Aut}
\DeclareMathOperator {\Reg}{Reg}
\DeclareMathOperator {\Ann}{Ann}
\DeclareMathOperator {\NNA}{(2)}
\DeclareMathOperator {\NA}{(1)}
\DeclareMathOperator {\rank}{rank}
\def\A  {\mathbb A}
\def\N  {\mathbb N}
\def\Z  {\mathbb Z}
\def\C  {\mathbb C}
\def\Ga  {\mathbb G_a}
\def\Q  {\mathbb Q}
\def\K  {\mathbb K}
\def\P  {\mathbb P}
\def\epsilon{\varepsilon}
\let\emptyset\varnothing
\newtheorem{lmm}{Lemma}
\newtheorem{example}{Example}
\newtheorem{corollary}{Corollary}
\newtheorem{theorem}{Theorem}
\newtheorem*{thm*}{Theorem}
\let\oldexmp\example
\renewcommand{\example}{\oldexmp\normalfont}
\theoremstyle{definition}
\newtheorem{defn}{Definition}
\begin{document}\sloppy

\title{On uniqueness of additive actions on complete toric varieties}
\author{Sergey Dzhunusov}
\address{National Research University Higher School of Economics, Faculty of Computer Science, Pokrovsky boulevard 11, Moscow, 109028 Russia}
\email{dzhunusov398@gmail.com}
\renewcommand{\address}{{
    \footnotesize
    \textsc{Moscow}\par\nopagebreak
    \textit{E-mail} : \texttt{dzhunusov398@gmail.com}
  }}

\date{}
\thanks{The author was supported by RSF grant 19-11-00172.}
\begin{abstract}
  By an additive action on an algebraic variety~$X$ we mean a regular effective action~$\mathbb{G}_a^n\times X\to X$ with an open orbit of the commutative unipotent group~$\mathbb{G}_a^n$.
  In this paper, we give a uniqueness criterion for additive action on a complete toric variety.

\end{abstract}

\subjclass[2010]{Primary 14L30, 14M25; \ Secondary 13N15, 14J50, 14M17}
\keywords{Toric variety, automorphism, unipotent group, locally nilpotent derivation, Cox ring, Demazure root}

\maketitle

\section{Introduction}
Let $\K$ be an algebraically closed field of characteristic zero.
Denote its additive group by $\Ga=(\K,+)$.
Consider the commutative unipotent group~$\Ga^n=\Ga\times\ldots\times\Ga$ ($n$~times).
By an additive action on an irreducible algebraic variety~$X$ of dimension $n$ we mean an effective regular action~$\Ga^n\times X\to X$ with an open orbit.
If a complete variety~$X$ admits an additive action, we can consider variety~$X$ as an equivariant completion of affine space~$\A^n$ with respect to the group of parallel translations on~$\A^n$. 

A systematic study of additive actions began with the work of Hassett and Tschinkel~\cite{HT}.
They introduced a correspondence between additive actions on the projective space~$\P^n$ and local $(n+1)$-dimensional commutative associative algebras with a unit; see also~\cite[Proposition~5.1]{KL} for a more general result. 
Hassett-Tschinkel correspondence allows to obtain the classification of additive actions on projective space~$\P^n$ for $n \leq 5$; these are precisely the cases when the number of additive actions is finite.

The study of additive actions was originally motivated by Manin's conjecture about the distribution of rational points of bounded height on algebraic varieties, see works of Chambert-Loir and Tschinkel~\cite{CLT1, CLT2}.

There are some classification results for additive actions on various classes of varieties, 
in particular, on flag varieties \cite{A1, Fe, FH, Dev},
singular del Pezzo surfaces \cite{DL},
Hirzebruch surfaces \cite{HT}, and weighted projective planes \cite{ABZ}.

Some results in this direction are  devoted to the uniqueness of additive actions.
In \cite{Sh}, it is proved that an additive action on a smooth nondegenerate projective quadric is unique up to isomorhpism.
Uniqueness of an additive action on a flag variety that is not isomorphic to a projective space is proved indepently and by completely different methods in~\cite{FH} and \cite{Dev}.

The present work concerns the uniqueness of additive actions in the case of toric varieties. This problem was raised in~\cite[Section~6]{AS}.
In~\cite{De}, it is proved that $\Ga$-actions on a toric variety~$X$ normalized by the acting torus~$T$ are in bijection with some special elements in the character lattice of the torus~$T$ called Demazure roots of the corresponding fan~$\Sigma$. 
Let~$\mathcal{R}(X)$ be the Cox ring of the variety $X$.
Cox~\cite{Cox} noted that normalized $\Ga$-actions on a toric variety can be interpreted as certain $\Ga$-subgroups of automorphisms of the ring~$\mathcal{R}(X)$. 
In turn, such subgroups correspond to homogeneous locally nilpotent derivations of this ring.

In~\cite{AR}, all toric varieties admitting an additive action are described in terms of their fans.
It is proved that if a complete toric variety~$X$ admits an additive action, then it admits an additive action normalized by the acting torus.
Moreover, any two normalized additive actions on~$X$ are isomorphic.

In~\cite{S}, all additive actions on a complete toric surface were classified.
It turns out that there are no more than two non-isomorphic additive actions on a complete toric surface, see Section~\ref{example} of this work for more details.

The present paper suggests a criterion of the uniqueness of additive action on complete toric varieties.
Let~$X$ be a complete toric variety with an acting torus~$T$ that admits an additive action.
Let~$M$ be the character lattice of~$T$ and~$N$ be the lattice of one-parameter subgroups of~$T$.
According to~\cite[Theorem~3]{AR}, the rays of the corresponding fan can be ordered in such a way that the primitive vectors on the first $n$ rays form a basis~$p_1, \ldots, p_n$ of the lattice~$N$ and the remaining rays~$p_{n+1}, \ldots, p_m$ lie in the negative octant with respect to this basis.
Let us denote the dual basis of the basis~${p_1, \ldots, p_n}$ by~${p_1^*,\,\ldots, p_n^*}$.
We also consider the set
\[\mathfrak{R}_i=\{e \in M: \langle  p_i, e\rangle =-1 \text{ and } \langle p_j, e\rangle \geq 0 \text{ for } j \neq i \}.\]
The elements of the set $\bigcup\limits_{i=1}^m\mathfrak{R}_i$ are called Demazure roots of the variety~$X$.

The main result of this paper is described in the following theorem.
\begin{thm*}
  Let $X$ be a complete toric variety admitting an additive action. The following conditions are equivalent:
  \begin{enumerate}
  \item any additive action on the variety~$X$ is isomorphic to the normalized additive action;
  \item the set~$\mathfrak R_i$ is equal to $\{-p_i^*\}$ for every $1 \leq i\leq n$.
  \end{enumerate}
\end{thm*}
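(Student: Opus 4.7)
The plan is to use the Cox ring $\mathcal{R}(X) = \K[x_1, \ldots, x_m]$ and the classification of homogeneous locally nilpotent derivations on it, following Cox and Demazure. Every additive action on $X$ lifts to an $n$-tuple of pairwise commuting locally nilpotent derivations (LNDs) $\delta_1, \ldots, \delta_n$ on $\mathcal{R}(X)$; each $\delta_i$ decomposes into homogeneous components $\delta_i = \sum_e c_{i,e}\,\partial_e$, where $e$ ranges over Demazure roots and $\partial_e$ is the associated root derivation. Two additive actions are isomorphic precisely when their corresponding $n$-dimensional commutative unipotent subgroups of $\Aut(X)$ are conjugate; since $\Aut(X)$ is generated by the torus $T$ and the root subgroups $U_e$, such conjugations can be analyzed combinatorially. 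The normalized additive action corresponds to the tuple $(\partial_{-p_1^*}, \ldots, \partial_{-p_n^*})$.

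For the implication $(2)\Rightarrow(1)$, start from an arbitrary additive action with commuting LNDs $\delta_1, \ldots, \delta_n$. Impose a natural partial order on Demazure roots and extract a ``leading'' weight component of each $\delta_i$; the open-orbit condition, together with the basis structure of $p_1, \ldots, p_n$, forces these leading roots to lie---after reindexing---one in each $\mathfrak R_i$ for $1 \le i \le n$. Hypothesis $(2)$ then pins the leading root in $\mathfrak R_i$ down to $-p_i^*$, and a further inductive reduction, conjugating by elements $\exp(t\,\partial_{e'})$ for $e' \in \mathfrak R_j$ with $j > n$, successively eliminates the subleading components. The resulting tuple coincides with $(\partial_{-p_1^*}, \ldots, \partial_{-p_n^*})$, so the original action is isomorphic to the normalized one.

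For the converse $(1)\Rightarrow(2)$, argue by contrapositive. Suppose some $\mathfrak R_{i_0}$ contains a root $e \neq -p_{i_0}^*$. Form the perturbed derivation $\tilde\delta_{i_0} := \partial_{-p_{i_0}^*} + \partial_e$ and keep $\tilde\delta_j := \partial_{-p_j^*}$ for $j \neq i_0$. Direct bracket computations, relying on $\langle p_j, e\rangle \geq 0$ for $j \neq i_0$, show that these LNDs pairwise commute and are linearly independent at a generic point, hence define an additive action on $X$. This action is not $T$-normalized, since $\tilde\delta_{i_0}$ is not a $T$-eigenvector, and it is not isomorphic to the normalized action: an $\Aut(X)$-conjugation between them would preserve structural invariants (such as the combinatorial type of the boundary of the open orbit, or the multiset of weights of the induced action on $\Aut(X)$ modulo its unipotent radical) that distinguish the two.

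The principal obstacle is the reduction in direction $(2)\Rightarrow(1)$: showing that, by conjugation in $\Aut(X)$, any commuting tuple of LNDs on $\mathcal{R}(X)$ can be straightened to a tuple of single homogeneous root derivations. This requires a delicate induction exploiting the explicit bracket formulas for $[\partial_e, \partial_{e'}]$ and the combinatorial hypothesis on the sets $\mathfrak R_i$ to ensure that the required straightening elements of $\Aut(X)$ actually exist at each stage and that the tuple remains commuting with open orbit after each cancellation.
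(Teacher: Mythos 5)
Your proposal has genuine gaps in both directions. In the contrapositive of $(1)\Rightarrow(2)$, the perturbed tuple with $\tilde\delta_{i_0}=\partial_{-p_{i_0}^*}+\partial_e$ and $\tilde\delta_j=\partial_{-p_j^*}$ for $j\neq i_0$ does \emph{not} pairwise commute. Writing $e=-p_{i_0}^*+\sum_{l\neq i_0}\epsilon_l p_l^*$ with some $\epsilon_r>0$ (such an $r$ exists by Lemma~\ref{firstroot} since $e\neq -p_{i_0}^*$), one computes $[\partial_e,\partial_{-p_r^*}]=-\langle p_r,e\rangle\, x_r^{-1}m_e m_{-p_r^*}\,\partial/\partial x_{i_0}\neq 0$, because the monomial $m_e$ of $\partial_e$ contains $x_r^{\epsilon_r}$. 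Already for $\P^2$ with $e=-p_1^*+p_2^*$ your tuple is $\bigl((x_2+x_3)\partial/\partial x_1,\ x_3\partial/\partial x_2\bigr)$, whose bracket is $-x_3\,\partial/\partial x_1$. The paper avoids this by attaching the perturbation to the \emph{other} coordinate: it picks a maximal ray $\rho_{1}$, a ray $\rho_2\le\rho_1$, the root $e=-p_1^*+dp_2^*$ with $d$ maximal, and perturbs $\partial_{-p_2^*}$ (not $\partial_{-p_1^*}$) to $\partial_{-p_2^*}+\partial_{-p_1^*+dp_2^*}$; then all brackets vanish because the offending monomial involves only $x_2$ and the variables $x_{n+1},\dots,x_m$. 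The maximality of $\rho_1$ and of $d$ is then used essentially in the non-isomorphism argument. Your proposed invariants for distinguishing the two actions also do not work as stated: the combinatorial type of the boundary of the open orbit fails already for $\P^2$, where both additive actions have boundary a line. The paper instead builds a concrete invariant inside the Cox ring, namely the subvarieties $S_V(\mathcal C_i)=\{f\in\mathcal C_i:\dim\Ann_V f\ge \dim V-1\}$ of the canonical homogeneous components $\mathcal C_i$, and shows by an explicit $2\times 2$-minor computation that these differ for the two tuples.

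In the direction $(2)\Rightarrow(1)$ you flag the straightening induction as the principal obstacle and do not carry it out; this is where your argument is incomplete. The paper bypasses straightening entirely: by Proposition~\ref{selective}, condition $\mathfrak R_i=\{-p_i^*\}$ for all $i\le n$ forces $\mathfrak R^+=\{-p_1^*,\dots,-p_n^*\}$, hence a maximal unipotent subgroup $U\subset\Aut(X)$ has dimension $|\mathfrak R^+|=n$. Any additive action embeds $\Ga^n$ as an $n$-dimensional unipotent subgroup, which therefore is itself maximal unipotent and conjugate to $U$, i.e.\ to the normalized $\Ga^n$. No elimination of subleading homogeneous components is needed. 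If you want to salvage your route, you would have to prove the straightening lemma in full, which is considerably harder than the dimension count and is not needed under hypothesis $(2)$.
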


Specifically, assertion~(2) of the theorem is equivalent to the fact that the dimension of a maximal unipotent subgroup $U$ of the automorphism group~$\Aut(X)$ is equal to~$n$, and the group~$U$ is the only candidate for a $\Ga^n$-group up to conjugation.
In opposite case, we construct two additive actions and prove that these actions are non-isomorphic.

For example, let us consider the case of projective plane~$\P^2$.
The plane~$\P^2$ can be considered as toric variety.
Let us denote the corresponding fan by $\Sigma$.
As before, we denote the primitive vectors on the rays of the fan~$\Sigma$ by $p_1, p_2, p_3$.
The vector $p_3$ is equal to $-p_1-p_2$.
One can compute directly that \[\mathfrak{R}_1=\{-p_1^*, -p_1^*+p_2^*\}, \,\mathfrak{R}_2=\{-p_2^*, -p_2^*+p_1^*\},\,\mathfrak{R}_3=\{p_1^*, p_2^*\}.\]

The automorphism group~$\Aut(\P^2)$ is ${\rm PGL}_3$ and the dimension of a maximal unipotent subgroup is equal to~$3$.
The variety~$\P^2$ does not satisfy assertion~(2) and by the Theorem we get that $\P^2$ admits at least two non-isomorphic additive actions.
In fact, the variety~$\P^2$ admits precisely two non-isomorphic additive actions, see \cite[Proposition~3.2]{HT}.

After presenting some preliminaries on toric varieties and Cox ring (Section~\ref{coxring}) and $\Ga$-actions and Demazure roots (Section~\ref{intrga}), we describe the results of~\cite{AR} (Section~\ref{intrbas}).
In Section~\ref{draa}, we recall some facts on Demazure roots of a toric variety admitting an additive action from~\cite{S}.
In Section~\ref{mainsection}, we prove the main result of the paper.
Finally, in Section~\ref{example} we give some corollaries and examples and discuss the case of toric surfaces.

\smallskip

The author is grateful to his supervisor Ivan Arzhantsev and to Yulia Zaitseva for useful discussions and comments.

\section{Toric varieties and Cox rings}\label{coxring}
In this section, we introduce basic notation of toric geometry,
see \cite{Fu, CLS} for details.

\begin{defn}
  A \emph{toric variety} is a normal variety~$X$ containing a torus~$T\simeq (\K^{\times})^n$ as a Zariski open subset such that the action of~$T$ on itself extends to an action of~$T$ on~$X$.
\end{defn}

Let~$M$ be the character lattice of~$T$ and~$N$ be the lattice of one-parameter subgroups of~$T$. Let~$\langle \cdot\,, \cdot\rangle: N \times M \to \Z$ be the natural pairing between the lattice~$N$ and the lattice~$M$.
It extends to the pairing~$\langle \cdot\,, \cdot\rangle_{\Q}: N_{\Q} \times M_{\Q} \to \Q$  between the vector spaces~${N_{\Q}=N\otimes_{\Z} \Q}$ and~${M_{\Q}=M\otimes_{\Z} \Q}$.
\begin{defn}
  A \emph{fan}~$\Sigma$ in the vector space~$N_{\Q}$ is a finite collection of strongly convex polyhedral cones~$\sigma$ such that
  \begin{enumerate}
  \item for all cones~$\sigma \in \Sigma$, each face of~$\sigma$ is also in~$\Sigma$;
  \item for all cones~$\sigma_1, \sigma_2 \in \Sigma$, the intersection~$\sigma_1 \cap \sigma_2$ is a face of the cones~$\sigma_1$ and~$\sigma_2$.
  \end{enumerate}
\end{defn}

There is a correspondence between toric varieties~$X$ and fans~$\Sigma$ in the vector space~$N_{\Q}$,
see \cite[Section~3.1]{CLS} for details. 

\smallskip

Here, we recall basic notions of the Cox construction, see~\cite[Chapter~1]{ADHL} for more details.
Let $X$ be a normal variety.
Suppose that the variety $X$ has a free finitely generated divisor class group~$\Cl(X)$ and there are only constant invertible regular functions on~$X$.
Denote the group of Weil divisors on $X$ by $\WDiv(X)$ and consider a subgroup~${K \subseteq \WDiv(X)}$ which maps onto~$\Cl(X)$ isomorphically. The \emph{Cox~ring} of the variety~$X$ is defined as
\[R(X)=\bigoplus_{D\in K} H^0(X,D) \text{, \,where }H^0(X,D)=\{f\in\K(X)^{\times}\mid \divisor(f)+D\geqslant0\}\cup\{0\}\]
and the multiplication on homogeneous components coincides with the multiplication in the field of rational functions~$\K(X)$ and extends to the Cox ring~$R(X)$ by linearity.
It is easy to see that up to isomorphism the graded ring $R(X)$ does not depend on the choice of the subgroup~$K$.

Suppose that the Cox ring~$R(X)$ is finitely generated.
Then ${\overline{X}:=\Spec R(X)}$ is a normal affine variety with an action of the torus~$H_X := \Spec\K[\Cl(X)]$.
There is an open~$H_X$-invariant subset $\widehat{X}\subseteq \overline{X}$ such that the complement~$\overline{X}\backslash\widehat{X}$ is of codimension at least two in $\overline{X}$,
there exists a good quotient~$\pi_X\colon\widehat{X}\rightarrow\widehat{X}/\!/H_{X}$, and the quotient space~$\widehat{X}/\!/H_{X}$ is isomorphic to $X$, see \cite[Construction~1.6.3.1]{ADHL}. Thus, we have the following diagram:
\[
\begin{CD}
\widehat{X} @>{i}>> \overline{X}=\Spec R(X)\\
@VV{/\!/H_{X}}V  \\
X
\end{CD}
\]

It is proved in \cite{Cox} that if~$X$ is toric, then~$R(X)$ is a polynomial algebra~$\K[x_1,\ldots,x_m]$, where the variables~$x_i$ correspond to~$T$-invariant prime divisors~$D_i$ on $X$ or, equivalently, to the rays~$\rho_i$ of the corresponding fan~$\Sigma$.
The $\Cl(X)$-grading on~$R(X)$ is given by~$\deg(x_i)=[D_i]$.
In this case, $\overline{X}$ is isomorphic to $\K^m$, and $\overline{X}\setminus\widehat{X}$ is a union of some coordinate subspaces in $\K^m$ of codimension at least two.
Denote the torus~$(\K^{*})^m$ acting diagonally on the variety~$\overline{X}$ by $\mathbb T$.
Therefore, there are two gradings on ${R}(X)$, namely, $\Z^m$-grading which corresponds to the $\mathbb T$-action and $\Cl(X)$-grading which corresponds to $H_X$-action.

Let us desribe a connection between the gradings by the group~$\Cl(X)$ and by the group~$\Z^m$ on ${R}(X)$.
Each~$w \in M$ gives a character~${\chi^w : T \to \K^{*}}$, and hence, $\chi^w$ is a rational function on~$X$.
By \cite[Theorem 4.1.3]{CLS}, the function~$\chi^w$ defines a principal divisor~${\divisor(\chi^w) = - \sum_{\rho} \langle p_\rho,w\rangle D_\rho}$.
Let us consider a map~${M \longrightarrow \Z^{m}}$ defined by~$w \mapsto  (\langle p_1, w\rangle, \ldots, \langle p_m, w\rangle)$, where $\rho_1, \ldots, \rho_m$ are one-dimensional cones of $\Sigma$ and $p_i$ are primitive vectors on rays $\rho_i$.
By \cite[\S3.4]{Fu}, this map gives an exact sequence
  \[
    0  \longrightarrow  M  \longrightarrow  \Z^m
    \longrightarrow  \Cl(X)  \longrightarrow 0.
  \]
  Here, a divisor $D \in \Z^{\Sigma(1)}=\Z^m$ determines an element $[D] \in \Cl(X)$.

\section{Demazure roots and locally nilpotent derivations}\label{intrga}
Let~$X$ be a toric variety of dimension $n$, and $\Sigma$ be the fan of the variety $X$.
Let~${\Sigma(1)=\{\rho_1, \ldots, \rho_m\}}$ in $N$ be the set of rays of the fan $\Sigma$ and $p_i$ be the primitive lattice vector on the ray $\rho_i$.

For any ray $\rho_i\in \Sigma(1)$, we consider the set $\mathfrak R_i$ of all vectors $e \in M$ such that
\begin{enumerate}
\item $\langle p_i, e\rangle=-1$ and $\langle p_j, e \rangle \geq 0$ for $j \neq i$, $1 \leq j \leq n$;
\item if  $\sigma$ is a cone of $\Sigma$ and $\langle v, e \rangle=0$ for all $v \in \sigma$, then the cone generated by $\sigma$ and $\rho_i$ is in $\Sigma$ as well.
\end{enumerate}

Elements of the set $\mathfrak R = \bigcup\limits_{i=1}^m \mathfrak R_i$ are called \emph{Demazure roots} of the fan $\Sigma$ (see \cite[Section~3.1]{De} or \cite[Section~3.4]{Oda}).
Let us divide the roots $\mathfrak R$ into two classes:
\begin{equation*}
\mathfrak S = \mathfrak R \cap -\mathfrak R,\quad \mathfrak U = \mathfrak R \setminus \mathfrak S.
\end{equation*}
Roots in $\mathfrak{S}$ and $\mathfrak{U}$ are called \emph{semisimple} and \emph{unipotent}, respectively.
\smallskip

A derivation $\partial$ of an algebra $A$ is said to be \emph{locally nilpotent} if for every $f\in A$, there exists $k\in\N$ such that $\partial^k(f) = 0$.
For any locally nilpotent derivation~$\partial$ on $A$, the map~${\varphi_{\partial}:\Ga\times A\rightarrow A}$, ${\varphi_{\partial}(s,f)=\exp(s\partial)(f)}$ defines a
structure of a rational $\Ga$-algebra on $A$. A derivation $\partial$ on a graded ring $A = \bigoplus\limits_{\omega \in K} A_{\omega}$ is said to be \emph{homogeneous} if it respects the~$K$-grading. If ${f,h\in A\backslash \Ker\partial}$ are homogeneous, then~${\partial(fh)=f\partial(h)+\partial(f)h}$ is homogeneous too, and ${\deg\partial(f)-\deg f}$ is equal to ${\deg\partial(h)-\deg h}$.
Thus, any homogeneous derivation $\partial$ has a well-defined \emph{degree} given
as $\deg\partial=\deg\partial(f)-\deg f$ for any homogeneous element~${f\in A\backslash \Ker\partial}$.

Every locally nilpotent derivation of $\Cl(X)$-degree zero on the Cox ring $R(X)$ induces a regular action~$\Ga\times X\to X$. In fact, any regular $\Ga$-action on $X$ arises this way, see \cite[Section~4]{Cox} and \cite[Theorem~4.2.3.2]{ADHL}.
If a $\Ga$-action on a variety $X$ is normalized by the acting torus $T$, then the lifted
$\Ga$-action on $\overline{X}=\K^m$ is normalized by the diagonal torus $\mathbb{T}$. Conversely,
any $\Ga$-action on $\K^m$ normalized by the torus $\mathbb T$ and commuting
with the subtorus $H_{X}$ induces a $\Ga$-action on $X$. This shows that $\Ga$-actions on $X$
normalized by the torus~$T$ are in bijection with locally nilpotent derivations of the Cox ring
$\K[x_1,\ldots,x_m]$ that are homogeneous with respect to the grading by the lattice~$\Z^m$ and have degree zero with respect to the $\Cl(X)$-grading.

For any element $e\in \mathfrak R_i$, we consider the locally nilpotent derivation $\partial_e = \prod_{j \neq i}x_{j}^{\langle p_j, e \rangle}\frac{\partial}{\partial x_i}$ on the algebra $R(X)$.
This derivation has degree zero with respect to the grading by the group~$\Cl(X)$.
This way one obtains a bijection between Demazure roots in $\mathfrak R$ and locally nilpotent derivations on the ring $R(X)$ which are homogeneous with respect to~$\Z^m$-grading and have degree zero with respect to the $\Cl(X)$-grading.
The latter ones, in turn, are in bijection with $\Ga$-actions on $X$ normalized by the acting torus.
\begin{stm}{\cite[Proposition~4.4]{Cox}}\label{homogeneousmonom}
  There is a one-to-one correspondence
  \[
    \begin{aligned}
      \mathfrak{R}_i &\leftrightarrow \{(x_i,x^D) : \text{$x^D
        \in {R}(X)$ is a
        monomial, $x^D \ne x_i$, $\deg(x^D) = \deg(x_i)$}\}.
    \end{aligned}
  \]
\end{stm}
\begin{corollary}\label{homogeneousbasis}
  If a homogeneous component~$C$ of the Cox ring~${R}(X)$ contains a variable~$x_i$, then the vector space~$C$ is spanned by~$x_i$ and~$\partial_e(x_i)$, where~$e$ runs over~$\mathfrak{R}_i$.
\end{corollary}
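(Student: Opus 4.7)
The corollary is a direct consequence of Proposition~\ref{homogeneousmonom}, so my plan is to unpack the bijection stated there and identify the monomial side with the values $\partial_e(x_i)$.

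First I would recall that the Cox ring $R(X) = \K[x_1,\ldots,x_m]$ is a polynomial algebra, so the $\Cl(X)$-grading is inherited from the finer $\Z^m$-grading by the surjection $\Z^m \to \Cl(X)$ in the exact sequence from Section~\ref{coxring}. In particular, every homogeneous component $C$ with respect to $\Cl(X)$ is spanned (as a $\K$-vector space) by the monomials it contains. Thus if $x_i \in C$, then $C = \mathrm{span}_{\K}\{x^D : x^D \text{ is a monomial with } \deg(x^D) = \deg(x_i)\}$, and I only need to identify all such monomials.

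Second, I would apply Proposition~\ref{homogeneousmonom}: the monomials $x^D$ with $\deg(x^D)=\deg(x_i)$ and $x^D \neq x_i$ are in bijection with the set $\mathfrak{R}_i$. It then suffices to check that the monomial attached to a root $e \in \mathfrak{R}_i$ by this bijection is exactly $\partial_e(x_i)$. This is a direct computation: by definition $\partial_e(x_i) = \prod_{j\neq i} x_j^{\langle p_j,e\rangle}$, and the exponents $\langle p_j, e\rangle$ are nonnegative because $e \in \mathfrak{R}_i$, so this is a genuine monomial, distinct from $x_i$. Using the exact sequence $0 \to M \to \Z^m \to \Cl(X) \to 0$ and the definition of the grading $\deg(x_j)=[D_j]$, one verifies that the $\Cl(X)$-degree of $\prod_{j\neq i} x_j^{\langle p_j, e\rangle}$ equals $\deg(x_i)$: indeed, the difference of the exponent vectors of $\partial_e(x_i)$ and $x_i$ in $\Z^m$ is precisely the image of $e \in M$ under $M \to \Z^m$, which maps to zero in $\Cl(X)$.

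Combining the two steps, the set $\{x_i\} \cup \{\partial_e(x_i) : e \in \mathfrak{R}_i\}$ enumerates all monomials in $C$, hence spans $C$. No step here is really an obstacle: the only thing to be careful about is matching the explicit bijection of Proposition~\ref{homogeneousmonom} with the formula for $\partial_e$, which is the bookkeeping with the exact sequence $0 \to M \to \Z^m \to \Cl(X) \to 0$ described above.
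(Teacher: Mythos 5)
Your proposal is correct and follows exactly the route the paper intends: the paper gives no separate proof of Corollary~\ref{homogeneousbasis}, treating it as an immediate consequence of Proposition~\ref{homogeneousmonom}, and your argument simply makes explicit that the monomial paired with a root $e\in\mathfrak{R}_i$ is $\partial_e(x_i)=\prod_{j\neq i}x_j^{\langle p_j,e\rangle}$ and that monomials span each $\Cl(X)$-homogeneous component. The degree bookkeeping via the exact sequence $0\to M\to\Z^m\to\Cl(X)\to 0$ is exactly the right verification, so nothing is missing.
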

\section{Complete toric varieties admitting an additive action}\label{intrbas}
In this section, we shortly present the results of \cite{AR}.
Let $X$ be a toric variety of dimension $n$ admitting an additive action, and $\Sigma$ be the fan of the variety $X$.

Since the variety~$X$ admits an additive action, the variety $X$ contains an open $\Ga^n$-orbit isomorphic to the affine space $\K^n$.
By~\cite[Lemma~1]{APS}, any invertible function on the variety~$X$ is constant and the divisor class group $\Cl(X)$ is freely generated.
In particular, the Cox ring~$R(X)$ introduced in Section~\ref{coxring} is well defined.

We denote primitive vectors on the rays of the fan $\Sigma$ by~$p_i$, where $1 \leq i \leq m$.

\begin{defn} \label{completecollection}
A set $e_1,\ldots,e_n$ of Demazure roots of a fan $\Sigma$ of dimension $n$ is called a {\it complete collection} if $\langle p_i,e_j\rangle=-\delta_{ij}$, where $1\le i,j\le n$ for some ordering of $p_1, \ldots, p_m$.
\end{defn}
An additive action on a toric variety $X$ is said to be \emph{normalized} if the image of the group $\Ga^n$ in $\Aut(X)$ is normalized by the acting torus~$T$.
\begin{theorem}{\cite[Theorem 1]{AR}} \label{cc}
  Let $X$ be a toric variety. Then normalized additive actions on $X$ are in bijection with complete collections of Demazure roots of the fan $\Sigma$.
\end{theorem}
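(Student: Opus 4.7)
The plan is to use the dictionary recalled immediately before the theorem: normalized $\Ga$-actions on $X$ correspond bijectively to $\Z^m$-homogeneous, $\Cl(X)$-degree-zero locally nilpotent derivations of $R(X)=\K[x_1,\dots,x_m]$, and every such derivation is $\partial_e=\prod_{j\ne i}x_j^{\langle p_j,e\rangle}\,\partial/\partial x_i$ for a unique Demazure root $e\in\mathfrak R_i$. A normalized additive action is then the same data as an $n$-tuple $\partial_{e_1},\dots,\partial_{e_n}$ of such derivations that pairwise commute and whose induced $\Ga^n$-action on $X$ is effective with open orbit. So the theorem reduces to showing that such $n$-tuples correspond exactly to complete collections of Demazure roots.

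For the forward direction, write $e_\alpha\in\mathfrak R_{i_\alpha}$ and $\partial_{e_\alpha}=m_\alpha\,\partial/\partial x_{i_\alpha}$ with $m_\alpha=\prod_{j\ne i_\alpha}x_j^{\langle p_j,e_\alpha\rangle}$. First I would show that the indices $i_1,\dots,i_n$ are pairwise distinct: if $i_\alpha=i_\beta$ with $\alpha\ne\beta$, then both velocity fields on $\overline X$ are scalar multiples of $\partial/\partial x_{i_\alpha}$, hence pointwise proportional, so together with the generically $(m-n)$-dimensional $H_X$-velocities they span at most $m-1$ directions at any point of $\widehat X$; this forces the $\Ga^n$-orbits on $X=\widehat X/\!/H_X$ to have dimension strictly less than $n$, contradicting the open-orbit hypothesis. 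After reordering the rays we may assume $i_\alpha=\alpha$. Then for $\alpha\ne\beta$,
\begin{equation*}
[\partial_{e_\alpha},\partial_{e_\beta}] \;=\; m_\alpha\frac{\partial m_\beta}{\partial x_\alpha}\frac{\partial}{\partial x_\beta} \;-\; m_\beta\frac{\partial m_\alpha}{\partial x_\beta}\frac{\partial}{\partial x_\alpha},
\end{equation*}
and since the two summands lie in the distinct rank-one summands $R(X)\cdot\partial/\partial x_\beta$ and $R(X)\cdot\partial/\partial x_\alpha$ of the free module $\Der(R(X))$, commutativity forces each to vanish. Hence $x_\alpha\nmid m_\beta$ and $x_\beta\nmid m_\alpha$, i.e.\ $\langle p_\alpha,e_\beta\rangle=\langle p_\beta,e_\alpha\rangle=0$; together with $\langle p_\alpha,e_\alpha\rangle=-1$ this is exactly $\langle p_i,e_j\rangle=-\delta_{ij}$ for $1\le i,j\le n$, so $(e_1,\dots,e_n)$ is a complete collection.

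For the converse, given a complete collection $e_1,\dots,e_n$, the same commutator computation now runs in reverse: $\langle p_\alpha,e_\beta\rangle=0$ for $\alpha\ne\beta$ in $\{1,\dots,n\}$ makes $x_\alpha\nmid m_\beta$, so $[\partial_{e_\alpha},\partial_{e_\beta}]=0$. By completeness each $m_\alpha$ involves only the variables $x_{n+1},\dots,x_m$, so the derivations $\partial_{e_1},\dots,\partial_{e_n}$ point in $n$ distinct coordinate directions and are simultaneously nonzero on the open subset $\{x_{n+1}\cdots x_m\ne 0\}$ of $\overline X$. Their commuting flows integrate to a $\Ga^n$-action on $\overline X$ that commutes with $H_X$ (each $\partial_{e_\alpha}$ has $\Cl(X)$-degree zero) and preserves $\widehat X$, and therefore descends through $\pi_X$ to an effective $\Ga^n$-action on $X$ whose generic orbit has dimension $n$, hence is open. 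The two assignments are mutually inverse by construction. The most delicate step, I expect, is verifying cleanly that the $\Ga^n$-action on $\overline X$ preserves $\widehat X$ and descends to an action whose orbit through a point of the dense torus is genuinely $n$-dimensional (transversality to the $H_X$-orbit); this transversality is precisely what the relations $\langle p_i,e_j\rangle=-\delta_{ij}$ encode, via the exact sequence $0\to M\to\Z^m\to\Cl(X)\to 0$.
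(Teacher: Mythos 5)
First, a point of order: the paper gives no proof of this statement --- Theorem~\ref{cc} is quoted verbatim from \cite[Theorem~1]{AR} --- so there is no internal argument to compare yours against. Judged on its own merits, your proposal follows what is essentially the standard route of \cite{AR}. The commutator computation is correct and is the heart of the matter: for $\partial_{e_\alpha}=m_\alpha\,\partial/\partial x_\alpha$ and $\partial_{e_\beta}=m_\beta\,\partial/\partial x_\beta$ with $\alpha\neq\beta$, the bracket splits into the two summands you write, each lies in a different rank-one summand of the free module of derivations and so must vanish separately, and vanishing is exactly $\langle p_\alpha,e_\beta\rangle=\langle p_\beta,e_\alpha\rangle=0$. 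The distinctness-of-indices argument and the open-orbit computation in the converse direction are also sound in substance (for the former it is slightly cleaner to push the two pointwise-proportional velocity fields down along $d\pi_X$ and conclude directly that the $\Ga^n$-velocities on $X$ span at most $n-1$ directions everywhere).

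The genuine gap is at the very first step, where you declare that a normalized additive action ``is the same data as'' an $n$-tuple of derivations $\partial_{e_1},\dots,\partial_{e_n}$ attached to Demazure roots. The dictionary recalled before the theorem applies to a single $\Ga$-subgroup that is \emph{itself} normalized by $T$. A normalized additive action only assumes that the whole image of $\Ga^n$ in $\Aut(X)$ is normalized by $T$; an individual one-parameter subgroup of $\Ga^n$ need not be $T$-normalized, and a priori the Lie algebra of this $\Ga^n$ is merely an $n$-dimensional space of commuting locally nilpotent derivations of $\Cl(X)$-degree zero, with no reason for any particular element to be $\Z^m$-homogeneous. To bridge this you must observe that conjugation gives a representation of $T$ on $\Ga^n\simeq\K^n$ by group automorphisms, which are linear, hence the representation is diagonalizable; each weight vector spans a $T$-normalized $\Ga$-subgroup and therefore corresponds to a Demazure root via the single-$\Ga$ dictionary. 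One should also note that each weight space is one-dimensional (a nonzero $\Z^m$-homogeneous derivation of $\Cl(X)$-degree zero is determined up to scalar by its $\Z^m$-degree, and the degree determines the root since $M\to\Z^m$ is injective), so the unordered set $\{e_1,\dots,e_n\}$ is canonically attached to the subgroup and the correspondence is a genuine bijection with complete collections rather than with ordered tuples. Without this diagonalization step the reduction on which your entire argument rests is unjustified.
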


\begin{corollary}
A toric variety $X$ admits a normalized additive action if and only if there is a complete collection of Demazure roots of the fan $\Sigma$.
\end{corollary}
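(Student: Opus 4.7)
The plan is to derive this corollary directly from Theorem~\ref{cc}. That theorem asserts a bijection between the set of normalized additive actions on $X$ and the set of complete collections of Demazure roots of $\Sigma$. Any bijection of sets forces them to be simultaneously empty or nonempty, which is precisely the content of the corollary. So at a formal level, there is nothing further to prove.

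For completeness I would spell out both directions. In the forward direction, one takes a normalized additive action and invokes the bijection of Theorem~\ref{cc} to produce the associated complete collection. For the converse, given a complete collection $e_1, \ldots, e_n$ with $\langle p_i, e_j\rangle = -\delta_{ij}$, I would describe the action explicitly through the locally nilpotent derivations $\partial_{e_1}, \ldots, \partial_{e_n}$ on the Cox ring $R(X)=\K[x_1,\ldots,x_m]$ introduced in Section~\ref{intrga}. The condition $\langle p_i, e_j\rangle = -\delta_{ij}$ on the first $n$ rays means that $\partial_{e_j}$ differentiates only with respect to $x_j$ (the other basis variables appear with exponent zero), hence the $\partial_{e_j}$ pairwise commute; each has $\Cl(X)$-degree zero, so together they descend to commuting $\Ga$-subgroups of $\Aut(X)$ normalized by the acting torus $T$. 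The resulting $\Ga^n$-action is effective with an open orbit, since its dimension equals $\dim X$, and it is normalized by $T$ by construction.

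The only genuine obstacle one might worry about lies inside Theorem~\ref{cc}, namely verifying that the assembled $\Ga^n$-action is really effective with an open orbit and that the correspondence is indeed a bijection; but all of this is already settled in~\cite{AR}, so the corollary itself requires no new argument beyond invoking the theorem.
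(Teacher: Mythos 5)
Your proposal is correct and matches the paper, which states this corollary without proof as an immediate consequence of the bijection in Theorem~\ref{cc}: a bijection between normalized additive actions and complete collections forces the two sets to be nonempty simultaneously. The extra construction you sketch for the converse direction is sound but already contained in the content of the cited theorem, so it adds nothing that the paper deemed necessary.
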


\begin{theorem} {\cite[Theorem~2]{AR}}
Any two normalized additive actions on a toric variety are isomorphic.
\end{theorem}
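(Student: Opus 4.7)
The plan is to apply Theorem~\ref{cc} to reformulate the statement as a question about complete collections of Demazure roots. Since normalized additive actions on $X$ are in bijection with complete collections, an isomorphism between two such actions amounts to an element $\varphi \in \Aut(X)$ conjugating one $\Ga^n$-subgroup onto the other. Thus it suffices to prove that the $\Ga^n$-subgroups built from any two complete collections $(e_1, \ldots, e_n)$ and $(e'_1, \ldots, e'_n)$ are $\Aut(X)$-conjugate.

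A first structural reduction is the observation that a complete collection is essentially rigid once one fixes the ordered $n$-tuple of rays whose primitive vectors form a basis of $N$: the conditions $\langle p_i, e_j \rangle = -\delta_{ij}$ for $1 \le i,j \le n$ force $e_j$ to be the dual basis element $-p_j^{*}$, and the further membership $e_j \in \mathfrak{R}_j$ forces every remaining primitive ray vector $p_k$ (for $k > n$) to satisfy $\langle p_k, -p_j^{*}\rangle \ge 0$, i.e.\ to lie in the non-positive orthant with respect to the chosen basis. Combined with \cite[Theorem~3]{AR} cited in the introduction, this reduces the problem to showing that any two such admissible ordered bases of rays are related by an automorphism of $X$.

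The core step is then to build, for any two admissible ordered $n$-tuples of rays, a concrete automorphism of $X$ mapping one to the other. I would exploit three sources of automorphisms. Elements of $T$ conjugate $\partial_e$ to $\chi^e(t)\,\partial_e$, so they only reparametrize each one-parameter subgroup. Root subgroups $\exp(s\partial_f)$ attached to other Demazure roots $f$ act nontrivially: their conjugation effect on a $\partial_{e_j}$ can be computed in the Cox ring using the commutator formula and the explicit shape $\partial_e = \prod_{j \ne i} x_j^{\langle p_j, e\rangle}\frac{\partial}{\partial x_i}$ from Section~\ref{intrga}, and for well-chosen $f$ this produces a derivation corresponding to a different complete collection with one ray swapped. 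Finally, symmetries of the fan in $\Aut(N, \Sigma)$ lift to toric automorphisms permuting rays.

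The main obstacle will be the transitivity statement: that the subgroup of $\Aut(X)$ generated by $T$, the root subgroups, and fan symmetries acts transitively on the set of admissible ordered bases of rays. My tactic is to reduce to elementary moves that replace a single basis ray $p_j$ by another ray of the fan (preserving admissibility), realize each such move by a single explicit conjugation, and then chain these moves to join any two admissible configurations. The strong non-positivity constraint from \cite[Theorem~3]{AR} on the remaining rays should severely restrict the combinatorics, ensuring that such a finite chain of moves always exists and can be realized inside $\Aut(X)$.
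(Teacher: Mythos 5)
The paper does not actually prove this statement---it is imported verbatim from \cite{AR}---so there is no in-paper argument to measure yours against, and your proposal has to stand on its own. Its opening reductions are fine: by Theorem~\ref{cc} it suffices to conjugate the $\Ga^n$-subgroups attached to two complete collections, and once an admissible ordered basis of rays is fixed the collection is forced to be $(-p_1^*,\dots,-p_n^*)$ with the remaining rays in the negative octant (note, though, that you invoke \cite[Theorem~3]{AR}, which assumes completeness, whereas the statement is asserted for arbitrary toric varieties). The genuine gap sits exactly where you place ``the main obstacle'': the assertion that the subgroup generated by $T$, the root subgroups and the fan symmetries acts transitively on admissible bases \emph{is} the theorem, and you offer no proof of it beyond saying that the combinatorics ``should'' permit a chain of elementary moves. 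Nothing is established about which moves exist, why they preserve admissibility, or why they suffice to connect any two configurations.

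Moreover, the mechanism you propose for a single move cannot work as stated. For a root element $u=\exp(s\partial_f)$ one has $\mathrm{Ad}(u)(\partial_{e_j})=\partial_{e_j}+s[\partial_f,\partial_{e_j}]+\cdots$, a sum of homogeneous derivations whose $\partial_{e_j}$-component is always $1$; it is never a single $\partial_{e'}$ for a different root $e'$, so conjugation by one root element cannot ``swap a ray'' (already for $\P^1$, no $\exp(s\partial_f)$ conjugates the upper unipotent subgroup of $\mathrm{PGL}_2$ onto the lower one). The standard device is the reflection-type element $\exp(\partial_f)\exp(-\partial_{-f})\exp(\partial_f)$ attached to a \emph{semisimple} root $f$, which normalizes $T$ and induces a lattice involution preserving $\Sigma$; one must then (i) determine which rays $\rho_k$, $k>n$, admit such an $f$ exchanging $\rho_k$ with some $\rho_j$, $j\le n$ --- by Proposition~\ref{selective}(1) this forces $f=-p_j^*$ with $\alpha_{kj}=1$ and $\alpha_{lj}=0$ for all $l>n$, $l\ne k$, a very restrictive condition --- and (ii) prove combinatorially that these reflections connect any two admissible bases. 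Fan symmetries cannot substitute for this, since a generic fan has none. Until (i) and (ii) are supplied, the argument is a plan rather than a proof.
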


\begin{theorem}{\cite[Theorem 3]{AR}}\label{3con}
Let $X$ be a complete toric variety. The following conditions are equivalent:
\begin{itemize}
\item[(1)]
  there exists an additive action on $X$;
\item[(2)]
  there exists a normalized additive action on $X$;
\item[(3)]
  a maximal unipotent subgroup $U$ of the automorphism group $\Aut(X)$ acts on $X$ with an open orbit.
\end{itemize}
\end{theorem}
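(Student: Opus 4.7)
The strategy is to work inside the algebraic group $\Aut(X)$---which is linear by the Cox--Demazure theory recalled above---and exploit the fact that two additive actions on $X$ are isomorphic if and only if the corresponding $\Ga^n$-subgroups are conjugate in $\Aut(X)$. Let $N_0\subseteq\Aut(X)$ be the commutative unipotent subgroup of dimension $n$ associated with the normalized action; explicitly, $N_0$ is generated by the commuting one-parameter subgroups $\exp(t\partial_{-p_i^*})$, $i=1,\ldots,n$. Fix a maximal unipotent subgroup $U\subseteq\Aut(X)$ containing $N_0$.

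The plan rests on the equivalence $(2)\Leftrightarrow\dim U = n$, which is the combinatorial heart of the theorem. Using the decomposition $\mathrm{Lie}\,\Aut(X)^0 = \mathrm{Lie}\,T\oplus\bigoplus_{\alpha\in\mathfrak{R}}\K\partial_\alpha$ and the splitting $\mathfrak{R}=\mathfrak{S}\sqcup\mathfrak{U}$ into semisimple and unipotent Demazure roots, one has $\dim U = |\mathfrak{U}| + |\mathfrak{S}_+|$ for a choice of positive semisimple roots. I plan to argue, using Corollary~\ref{homogeneousbasis}, that $(2)$ forces (a) no $p_i^*$ (for $i\le n$) is itself a Demazure root---otherwise the $\mathrm{SL}_2$-triple generated by $\partial_{p_i^*}$ and $\partial_{-p_i^*}$ would act on the homogeneous component spanned by $x_i$ and produce elements of $\mathfrak{R}_i$ beyond $-p_i^*$, contradicting $|\mathfrak{R}_i|=1$; so each $-p_i^*$ is unipotent; and (b) the remaining dimension count---involving the roots in $\mathfrak{R}_j$ for $j>n$---balances out so that $|\mathfrak{U}|+|\mathfrak{S}_+|=n$. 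The converse direction $\neg(2)\Rightarrow\dim U>n$ is immediate: an extra element of some $\mathfrak{R}_{i_0}$ contributes a further one-parameter subgroup to $U$ linearly independent from $N_0$.

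The direction $(2)\Rightarrow(1)$ is then short: $\dim U = \dim N_0 = n$ with $N_0\subseteq U$ forces $U = N_0$, and by conjugacy of maximal unipotent subgroups in the linear algebraic group $\Aut(X)$, any other $\Ga^n$-subgroup $G$ acting with open orbit lies in a conjugate of $U$ of dimension $n$ and so coincides with it. Every additive action is thus isomorphic to the normalized one. For $\neg(2)\Rightarrow\neg(1)$, given $e\in\mathfrak{R}_{i_0}\setminus\{-p_{i_0}^*\}$, the plan is to construct a non-normalized additive action by modifying one of the derivations $\partial_{-p_\ell^*}$ (for an index $\ell$ depending on $e$) to $\delta_\ell=\partial_{-p_\ell^*}+\partial_e$, keeping the others unchanged, and verifying directly in the Cox ring that the new $n$-tuple of derivations pairwise commutes, consists of locally nilpotent derivations, and has an open orbit---mimicking the $\P^2$ calculation indicated in the introduction. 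The resulting $\Ga^n$-subgroup is distinguished from $N_0$ by an $\Aut(X)$-conjugation invariant such as the nilpotency pattern of the Lie algebra elements, producing a second additive action not isomorphic to the normalized one.

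The main obstacle is the equivalence $(2)\Leftrightarrow\dim U=n$, and within it the subtle interaction with the roots $\mathfrak{R}_j$ for $j>n$. The construction step is also delicate because sums of locally nilpotent derivations need not be locally nilpotent, so commutativity and nilpotency must be checked by explicit Cox-ring computation, and non-conjugacy with $N_0$ must be verified via an invariant that survives passage through arbitrary elements of $\Aut(X)$.
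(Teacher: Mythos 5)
Your proposal does not prove the stated theorem. Theorem~\ref{3con} is an \emph{existence} statement, quoted from \cite{AR} and not proved in this paper at all: condition~(2) asserts only that \emph{some} normalized additive action exists, and the content is the chain $(1)\Rightarrow(2)\Rightarrow(3)\Rightarrow(1)$ --- in particular the nontrivial implication that a complete toric variety admitting \emph{any} additive action already admits one normalized by the acting torus. Your argument instead targets the \emph{uniqueness} criterion (the paper's main result, Theorem~\ref{maintheorem}): you read (2) as ``$\dim U=n$'', equivalently ``$\mathfrak R_i=\{-p_i^*\}$ for all $i\le n$'', and set out to show this holds iff every additive action is isomorphic to the normalized one. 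That is a different and generally inequivalent statement: $\P^2$ satisfies all three conditions of Theorem~\ref{3con} while $\dim U=3>2=n$, so the ``equivalence $(2)\Leftrightarrow\dim U=n$'' on which your plan rests is false for the theorem as stated.

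Concretely, the missing content is twofold. First, $(1)\Rightarrow(2)$: one must show that an arbitrary, possibly non-normalized, open $\Ga^n$-orbit forces the fan to have the shape described in Proposition~\ref{ort}, hence to carry a complete collection of Demazure roots and, by Theorem~\ref{cc}, a normalized additive action; nothing in your proposal addresses how a non-normalized action constrains the fan. Second, $(3)\Rightarrow(1)$: an open orbit of a maximal unipotent subgroup $U$, which may well have $\dim U>n$, must be cut down to an open orbit of a commutative unipotent subgroup of dimension exactly $n$; this also does not appear. As a secondary remark, even read as an attempt at Theorem~\ref{maintheorem}, your non-conjugacy step is left vague (``nilpotency pattern of the Lie algebra elements''), whereas the paper builds the explicit conjugation invariant $S_V(\mathcal C)$ inside the Cox ring; your construction of the perturbed tuple $\delta_\ell=\partial_{-p_\ell^*}+\partial_e$ does, however, mirror the paper's tuples $D^{\NA}$ and $D^{\NNA}$.
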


\begin{defn}
  The \emph{negative octant} of the rational vector space $V$ with respect to a basis~${f_1, \ldots, f_n}$ is the cone~${\left\{\sum\limits_{i=1}^n \lambda_i f_i \mid \lambda_i \leq 0\right\} \subset V}$.
\end{defn}
\begin{stm}{\cite[Proposition 1]{S}}\label{ort}
  Let $X$ be a complete toric variety.
  The following statements are equivalent:
  \begin{enumerate}
  \item there exists an additive action on $X$;
  \item we can order rays of the fan $\Sigma$ in such a way that the primitive vectors on the first $n$ rays form a basis of the lattice $N$, and the remaining rays lie in the negative octant with respect to this basis.
  \end{enumerate}
\end{stm}
  We can order $p_i$ in such a way that the first $n$ vectors form a basis of the lattice $N$ and the remaining vectors $p_j$ $(n < j \leq m)$ are equal to~$\sum_{i=1}^n-\alpha_{ji} p_i$ for some non-negative integers $\alpha_{ji}$.
\begin{corollary}\label{clbasis}

  The elements $\deg(x_j), n<j\leq m$ form a basis of $\Cl(X)\simeq \Z^{m-n}$
  and
  an element $\deg(x_i), 1\leq i \leq n$ is equal to $\sum\limits_{j=n+1}^m \alpha_{ji}\deg(x_j)$.
\end{corollary}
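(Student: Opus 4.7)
The plan is to read off the corollary directly from the exact sequence
\[
0 \longrightarrow M \longrightarrow \Z^m \longrightarrow \Cl(X) \longrightarrow 0
\]
recalled at the end of Section~\ref{coxring}, once we have written the left map explicitly in the bases provided by the hypothesis. Let $e_1,\ldots,e_m$ denote the standard basis of $\Z^m$, so that under the second map $e_k \mapsto [D_k] = \deg(x_k)$. Since $p_1,\ldots,p_n$ is a basis of $N$, we have the dual basis $p_1^*,\ldots,p_n^*$ of $M$, and my first step is to compute the image of each $p_i^*$ under $w\mapsto(\langle p_1,w\rangle,\ldots,\langle p_m,w\rangle)$.

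For $k\le n$, $\langle p_k,p_i^*\rangle = \delta_{ki}$. For $j>n$, using $p_j=\sum_{k=1}^n -\alpha_{jk}p_k$, one gets $\langle p_j,p_i^*\rangle = -\alpha_{ji}$. Hence
\[
p_i^* \;\longmapsto\; e_i - \sum_{j=n+1}^m \alpha_{ji}\,e_j, \qquad 1\le i\le n.
\]
Applying the quotient map to $\Cl(X)$ immediately gives the relation
\[
\deg(x_i) = \sum_{j=n+1}^m \alpha_{ji}\deg(x_j), \qquad 1\le i\le n,
\]
which is the second assertion.

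For the first assertion I would consider the homomorphism $\varphi\colon \Z^{m-n}\to \Cl(X)$ sending the $j$-th standard vector to $\deg(x_j)$ for $n<j\le m$. Surjectivity is immediate from the relations above, since every generator $\deg(x_i)$ with $i\le n$ lies in the image of $\varphi$. For injectivity, suppose $\sum_{j>n} c_j e_j$ lies in the image of $M\to \Z^m$; writing this image as $\sum_{i=1}^n b_i\bigl(e_i-\sum_{j>n}\alpha_{ji}e_j\bigr)$ and comparing coefficients of $e_1,\ldots,e_n$ forces every $b_i=0$, whence every $c_j=0$. Equivalently, in the chosen bases the matrix of $M\hookrightarrow\Z^m$ is $\binom{I_n}{-A}$ with $A=(\alpha_{ji})$, so its Smith normal form is trivial and the cokernel is freely generated by the images of $e_{n+1},\ldots,e_m$.

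There is no real obstacle here; the only thing to be careful about is that the identification of the map $\Z^m\to\Cl(X)$ with $e_k\mapsto\deg(x_k)=[D_k]$ comes from Cox's description of $R(X)$ recalled in Section~\ref{coxring}, and the computation of the image of $p_i^*$ uses precisely the expressions $p_j=\sum_{i=1}^n -\alpha_{ji}p_i$ provided just before the corollary.
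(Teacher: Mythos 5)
Your proposal is correct and takes essentially the same route as the paper: both compute the matrix of $M\to\Z^m$ in the bases $p_1^*,\ldots,p_n^*$ and the standard basis of $\Z^m$ to be $\left(\begin{smallmatrix}I_n\\-A\end{smallmatrix}\right)$ and then read both assertions off the exact sequence $0\to M\to\Z^m\to\Cl(X)\to 0$. The only difference is that you spell out the surjectivity and injectivity checks for the cokernel that the paper leaves implicit.
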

\begin{proof}
  The matrix of the linear map $M\rightarrow \Z^m$   in the basis~$p_1^*, \ldots, p_n^*$  in $M$ and in the standart basis of the lattice~${\Z^m}$ is equal to
  $
    \left(
      \begin{array}{c}
      I_{n}\\
      -A\\
      \end{array}
    \right)
    $,
  where $I_n$ is the identity matrix of size~$n$ and $A=(\alpha_{ji})$, $n< j\leq m, 1\leq i\leq n$.
Therefore, the elements $\deg(x_j), n<j\leq m$ form a basis of $\Cl(X)\simeq \Z^{m-n}$
  and
  the elements $\deg(x_i)$ are equal to $\sum\limits_{j=n+1}^m \alpha_{ji}\deg(x_j)$.
  
\end{proof}
\section{Demazure roots of a variety admitting an additive action}\label{draa}
Let $X$ be a complete toric variety of dimension $n$ admitting an additive action, and $\Sigma$ be the fan of the variety $X$.
Denote the primitive vectors on the rays $\rho_i$ of the fan $\Sigma$ by~$p_i$, where $1 \leq i \leq m$.

From Proposition~\ref{ort} it follows that we can order $p_i$ in such a way that the first $n$ vectors form a basis of the lattice $N$ and the remaining vectors $p_j$ $(n < j \leq m)$ are equal to~$\sum_{i=1}^n-\alpha_{ji} p_i$ for some non-negative integers $\alpha_{ji}$.
Let us denote the dual basis of the basis $p_1, \ldots, p_n$ by $p_1^*,\,\ldots, p_n^*$.
\begin{lmm}{\cite[Lemma~2]{S}}\label{firstroot}
  Consider $1\leq i\leq n$.
  The set ${\mathfrak R}_i$ is a subset of the set~${-p_i^* + \sum\limits_{l=1,l\neq i}^n\Z_{\geq 0}p_j^*}$ and the vector $-p_i^*$ is contained in~${\mathfrak R}_i$.
 \end{lmm}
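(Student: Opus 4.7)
The plan is a direct coordinate computation in the dual basis $p_1^*, \ldots, p_n^*$, together with a verification that $-p_i^*$ satisfies the defining pairing conditions of a Demazure root.

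First, I establish the containment $\mathfrak{R}_i \subseteq -p_i^* + \sum_{l \neq i}\Z_{\geq 0}\, p_l^*$. By Proposition~\ref{ort} the primitive vectors $p_1, \ldots, p_n$ form a basis of $N$, so any $e \in M$ expands uniquely as
\[
e \;=\; \sum_{k=1}^n \langle p_k, e\rangle\, p_k^*.
\]
For $e \in \mathfrak{R}_i$ the definition supplies $\langle p_i, e\rangle = -1$ and $\langle p_k, e\rangle \geq 0$ for every $k \neq i$, in particular for the basis indices $k \in \{1, \ldots, n\} \setminus \{i\}$. Substituting these values into the expansion gives exactly the claimed form, and the coefficients are integers because $e \in M$ and $p_1, \ldots, p_n$ is a lattice basis of $N$.

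Next, I verify that $-p_i^*$ itself lies in $\mathfrak{R}_i$. For the basis rays with $1 \leq k \leq n$, duality gives $\langle p_k, -p_i^*\rangle = -\delta_{ik}$, which is $-1$ for $k = i$ and $0$ otherwise. For the remaining rays with $n < j \leq m$, the description from Proposition~\ref{ort} that $p_j = -\sum_{l=1}^n \alpha_{jl}\, p_l$ with $\alpha_{jl} \in \Z_{\geq 0}$ yields
\[
\langle p_j, -p_i^*\rangle \;=\; \alpha_{ji} \;\geq\; 0,
\]
so every required inequality is satisfied and the pairing condition defining a Demazure root holds for $-p_i^*$ with respect to $\rho_i$.

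The argument is essentially routine once the dual-basis expansion is written down; the only small point to remember is the description of the rays $\rho_j$ with $j > n$ from Proposition~\ref{ort}, which is precisely what turns the non-positivity of the $p_i^*$-coordinates of those $p_j$ into the non-negativity of the pairings $\langle p_j, -p_i^*\rangle$. I do not anticipate a genuine obstacle.
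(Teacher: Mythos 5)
Your argument is correct and is precisely the expected computation: the containment is just the dual-basis expansion $e=\sum_k\langle p_k,e\rangle p_k^*$ combined with the pairing conditions, and the membership $-p_i^*\in\mathfrak R_i$ follows from $\langle p_k,-p_i^*\rangle=-\delta_{ik}$ and $\langle p_j,-p_i^*\rangle=\alpha_{ji}\ge 0$; the paper itself gives no proof, citing \cite[Lemma~2]{S}, and your route is the same one. The only point worth a remark is that the definition of $\mathfrak R_i$ in Section~\ref{intrga} contains a second, combinatorial condition (if $e$ vanishes on a cone $\sigma\in\Sigma$, then the cone generated by $\sigma$ and $\rho_i$ must again belong to $\Sigma$), which your verification of $-p_i^*\in\mathfrak R_i$ does not address; for a \emph{complete} fan this condition is known to follow automatically from the pairing conditions (cf.\ \cite{De,Oda}), and the introduction of the paper accordingly defines $\mathfrak R_i$ by the pairing conditions alone, so this is a one-sentence remark to add rather than a genuine gap.
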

Consider the set $\Reg(\mathfrak S) =\{u \in N : \langle u, e \rangle \neq 0 \text{ for all } e \in \mathfrak S\}$.
Any element $u$ from the set~$\Reg(\mathfrak S)$ divides the set of semisimple roots $\mathfrak S$ into two classes as follows:
\[{\mathfrak S_{u}^+ =\{e \in \mathfrak S: \langle u, e\rangle > 0\}},\quad {\mathfrak S_{u}^- =\{e \in \mathfrak S: \langle u, e\rangle < 0\}}.\]
At this point, any element of ${\mathfrak S_{u}}^+$ is called \emph{positive} and any element of $\mathfrak S_{u}^-$ is called \emph{negative}.

\begin{stm}{\cite[Proposition 2]{S}}\label{selective}
  Let $X$ be a complete toric variety admitting an additive action, and ${\mathfrak R = \bigcup\limits_{i=1}^{m} \mathfrak R_i}$ be the set of its Demazure roots.
  Then
  \begin{enumerate}
  \item any element $e \in  \mathfrak R_j, j > n$, is equal to $p_{i'}^*$ for some $1 \leq i'\leq n$;
  \item all unipotent Demazure roots lie in the set $\bigcup\limits_{i=1}^{n}\mathfrak R_i$;
  \item there exists a vector $u\in \Reg(\mathfrak S)$ such that $\mathfrak S_{u}^+ \subset \bigcup\limits_{i=1}^n \mathfrak R_i$.
  \end{enumerate}
\end{stm}

Now we recall basic definitions from the theory of partially ordered sets.
\begin{defn}
  Consider a set $P$ and a binary relation $\leq$ on $P$. Then $\leq$ is a \emph{preorder} if it is reflexive and transitive; i.e., for all $a, b$ and $c$ in $P$, we have:
  \begin{enumerate}
  \item $a \leq a$ (reflexivity);
  \item    if $a \leq b$ and $b \leq c$, then $a \leq c$ (transitivity).
  \end{enumerate}
  Two elements $a, b$ are \emph{comparable} if $a \leq b$ or $b \leq a$.
  Otherwise, they are \emph{incomparable}.
  If every pair of different elements is incomparable, then the preorder is called \emph{trivial}.

  An element $a$ in $P$ is \emph{maximal} if for any element $b$ in $P$ either $b\leq a$ or the elements~$a,b$ are incomparable.
\end{defn}
Define a preorder $\leq$ on the set of rays~${\{\rho_1, \ldots, \rho_n\}}$ in the following way:
\[{\rho_{i_1} \leq \rho_{i_2}}\text{ if } {\alpha_{ji_1}\leq \alpha_{ji_2}}\text{ for  every } {n < j\leq m}.\]
\section{Main results}\label{mainsection}
Let $X$ be a complete toric variety of dimension $n$ admitting an additive action, and $\Sigma$ be the fan of the variety $X$.
Denote the primitive vectors on the rays $\rho_i$ of the fan $\Sigma$ by~$p_i$, where $1 \leq i \leq m$.
From Proposition~\ref{ort} it follows that we can order $p_i$ in such a way that the first $n$ vectors form a basis of the lattice $N$ and the remaining vectors $p_j$ $(n < j \leq m)$ are equal to~$\sum_{i=1}^n-\alpha_{ji} p_i$ for some non-negative integers $\alpha_{ji}$.

  Fix a vector $u \in \Reg(\mathfrak S)$ that satisfies assertion~(3) of Proposition \ref{selective}.
  Hereafter, we write~$\mathfrak S^+$ instead of $\mathfrak S^+_{u}$.
  Denote the set $\mathfrak S^+ \cup \mathfrak U$ by $\mathfrak R^+$.
  From Proposition~\ref{selective}, it follows that the set~$\mathfrak R^+$ lies in the set $\bigcup\limits_{i=1}^n\mathfrak R_i$.
  The one-parameter subgroups of roots from $\mathfrak R^+$ generate the maximal unipotent subgroup~$U$ in the group~$\Aut(X)$ and $\dimm U = |\mathfrak{R}^+|$, see~\cite[Proposition~4.3]{Cox}.
Denote the set ${\mathfrak R^+ \cap \mathfrak R_i}$ by $\mathfrak R^+_i$.

Let us denote a locally nilpotent derivation that corresponds to the Demazure root~${e\in \mathfrak R}$ by $\partial_{e}$.

\begin{theorem}\label{maintheorem}
  Let $X$ be a complete toric variety admitting an additive action.
  The following conditions are equivalent:
  \begin{enumerate}
  \item the set $\mathfrak R_i$ is equal to $\{-p_i^*\}$ for every $1 \leq i\leq n$;
  \item the set $\mathfrak R^+$ is equal to $\{-p_1^*, \ldots, -p_n^*\}$;
  \item the preorder $\leq$ on the set of rays  $\{\rho_1, \ldots, \rho_n\}$ is trivial;
  \item any additive action on variety~$X$ is isomorphic to the normalized additive action.
  \end{enumerate}
\end{theorem}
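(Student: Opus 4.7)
The plan is to establish the chain of equivalences $(3)\Leftrightarrow(1)\Leftrightarrow(2)$ combinatorially and then $(1)\Leftrightarrow(4)$, the latter carrying the substantive content.

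\emph{Combinatorial equivalences.} By Lemma~\ref{firstroot} every $e\in\mathfrak R_i$ with $i\le n$ takes the form $e=-p_i^*+\sum_{l\ne i}c_l p_l^*$ with $c_l\in\Z_{\ge 0}$, and the remaining positivity conditions $\langle p_k,e\rangle\ge 0$ for $k>n$ translate into $\sum_l c_l\alpha_{kl}\le\alpha_{ki}$. A nontrivial solution exists iff the elementary choice $c_{i'}=1$ already works for some $i'\ne i$, which is precisely $\rho_{i'}\le\rho_i$; this gives $(1)\Leftrightarrow(3)$. For $(1)\Leftrightarrow(2)$, Proposition~\ref{selective} gives $\mathfrak R^+\subseteq\bigcup_{i=1}^n\mathfrak R_i$, so $(1)$ forces $\mathfrak R^+\subseteq\{-p_1^*,\ldots,-p_n^*\}$, and each $-p_i^*$ actually lies in $\mathfrak R^+$: if $-p_i^*$ is unipotent this is immediate, and if semisimple then Proposition~\ref{selective}(1) excludes $p_i^*$ from $\mathfrak S^+$, forcing $-p_i^*\in\mathfrak S^+$. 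Conversely, any extra $e\in\mathfrak R_i\setminus\{-p_i^*\}$ is either unipotent, hence in $\mathfrak R^+$, or semisimple, so $e$ or $-e$ lies in $\mathfrak R^+$; both options contradict $(2)$ since $e$ is not of the shape $-p_l^*$.

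\emph{Implication $(1)\Rightarrow(4)$.} Under $(1)$, $\partial_{-p_i^*}=\prod_{j>n}x_j^{\alpha_{ji}}\,\partial/\partial x_i$ has coefficient monomial supported only on the variables $x_j$ with $j>n$, none of which is differentiated by any derivation in the family, so the $n$ derivations pairwise commute. The maximal unipotent subgroup $U\subset\Aut(X)$, of dimension $|\mathfrak R^+|=n$, is therefore abelian and isomorphic to $\Ga^n$. Any additive action has unipotent image of dimension $n$ contained in a maximal unipotent subgroup; dimension forces equality, and conjugacy of maximal unipotent subgroups in $\Aut(X)^0$ produces an isomorphism with the normalized action.

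\emph{Implication $(4)\Rightarrow(1)$ and the main obstacle.} We argue by contrapositive. Assume $\mathfrak R_i\ne\{-p_i^*\}$ for some $i$; by the combinatorial part choose $e=-p_i^*+p_{i'}^*\in\mathfrak R_i$ with $i'\ne i$. Replace $\partial_{-p_{i'}^*}$ by $\partial_{-p_{i'}^*}+\partial_e$ in the normalized collection. A direct commutator computation confirms that the new family of $n$ locally nilpotent derivations of $\Cl(X)$-degree zero still commutes pairwise: the crucial fact is that $[\partial_e,\partial_{-p_k^*}]=0$ for every $k\ne i'$, which follows from noting that $x_k$ does not occur in the coefficient monomial of $\partial_e$ and $x_i$ does not occur in that of $\partial_{-p_k^*}$; the potentially problematic pair $(\partial_e,\partial_{-p_{i'}^*})$ does not arise because $\partial_{-p_{i'}^*}$ itself has been replaced. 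On the generic point of the open $T$-orbit the resulting $n$ vector fields are linearly independent, so their exponentials generate a $\Ga^n$-subgroup of $\Aut(X)$ with open orbit, producing a new additive action. This action is not $T$-normalized, since $\partial_{-p_{i'}^*}+\partial_e$ fails to be $\Z^m$-homogeneous. The heart of the argument is to show non-isomorphism with the normalized action: supposing $\phi\in\Aut(X)$ conjugates the new $\Ga^n$ onto the normalized one, the maximal torus $\phi^{-1}T\phi$ would normalize the new $\Ga^n$, and conjugacy of maximal tori in $\Aut(X)^0$ lets us absorb a factor into $\phi$ to reduce to $\phi\in N_{\Aut(X)^0}(T)$. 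Then $\phi$ must send $\partial_{-p_{i'}^*}+\partial_e$ to a $\Z^m$-homogeneous LND lying in the normalized $\Ga^n$, which is impossible because of its mixed-degree structure. Making this reduction precise, and pinning down exactly how the Weyl-group-like quotient $N_{\Aut(X)^0}(T)/T$ permutes the degree-zero LNDs in the Cox ring, is what I expect to require the most care.
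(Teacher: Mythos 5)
Your combinatorial equivalences $(1)\Leftrightarrow(2)\Leftrightarrow(3)$ and the implication $(2)\Rightarrow(4)$ match the paper's argument: the paper isolates your ``elementary choice $c_{i'}=1$'' observation as a separate lemma and characterizes $-p_{i_1}^*+p_{i_2}^*\in\mathfrak R$ by $\rho_{i_1}\ge\rho_{i_2}$ exactly as you do. Your construction of the second additive action is also essentially the paper's, except that the paper makes two normalizations you drop and later needs: the ray carrying the extra root (your $\rho_i$, its $\rho_1$) is chosen \emph{maximal} for the preorder, and the perturbing root is taken to be $-p_1^*+d\,p_2^*$ with $d$ \emph{maximal}, not $d=1$. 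The commutation and open-orbit checks are fine (local nilpotency of the perturbed derivation, which you do not address, follows from the same triangularity you already use).

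The genuine gap is the non-isomorphism argument, and it is not just a matter of ``making the reduction precise'' --- the reduction to $\phi\in N_{\Aut(X)^0}(T)$ cannot be carried out. Write $G_1$, $G_2$ for the images of $\Ga^n$ under the normalized and perturbed actions. From $\phi G_2\phi^{-1}=G_1$ you get that $S=\phi^{-1}T\phi$ normalizes $G_2$; to replace $\phi$ by $\phi\psi^{-1}$ with $\psi S\psi^{-1}=T$ while preserving the property of conjugating $G_2$ onto $G_1$ you would need $\psi\in N(G_2)$, i.e.\ you would need $T$ itself to lie in $N(G_2)$ --- but $T$ does not normalize $G_2$, which is the whole point of the construction. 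Worse, the reduction is intrinsically circular: conjugating $S$ back to $T$ by an arbitrary $\gamma\in\Aut(X)^0$ only shows that $\gamma^{-1}G_2\gamma$ is a $T$-normalized $\Ga^n$ with open orbit, and by the quoted result of Arzhantsev--Romaskevich every such subgroup gives an action isomorphic to the normalized one; so this line of reasoning merely restates the hypothesis $a^{(2)}\cong a^{(1)}$ instead of refuting it. The non-$T$-homogeneity of $\partial_{-p_{i'}^*}+\partial_e$ is not a conjugation invariant, so some genuinely invariant device is required. The paper supplies one: since the degrees $[D_1],\ldots,[D_m]\in\Cl(X)$ are canonical, the homogeneous components $\mathcal C_i$ of the Cox ring containing variables are canonically attached to $X$, and the sets $S_V(\mathcal C_i)=\{f\in\mathcal C_i:\dim\Ann_V f\ge\dim V-1\}$, with $V$ the span of the acting derivations, are invariants of the additive action up to isomorphism. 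It then shows $S_{V^{(1)}}(\mathcal C_i)=S_{V^{(2)}}(\mathcal C_i)$ for $i\ne1$ (this is where maximality of $\rho_1$ enters) and, by an explicit $2\times2$ minor computation (this is where maximality of $d$ enters), that $S_{V^{(2)}}(\mathcal C_1)$ is a proper closed subset of $S_{V^{(1)}}(\mathcal C_1)$. Without an invariant of this kind your argument does not close.
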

\begin{proof}
  Equivalence $(1) \Leftrightarrow (2)$ follows from Proposition~\ref{selective}.

  \smallskip

  \begin{lmm}\label{compare}
    The vector $-p_{i_1}^*+p_{i_2}^*$ is a Demazure root if and only if $\rho_{i_1}\geq \rho_{i_2}$.
  \end{lmm}
  \begin{proof}
    The element ${-p_{i_1}^*+p_{i_2}^*}$ is a Demazure root if and only if the element satisfies inequalities~${\langle p_j, -p_{i_1}^*+p_{i_2}^* \rangle \geq 0}$ for all~$n < j \leq m$ since~${\langle p_i, -p_{i_1}^*+p_{i_2}^* \rangle \geq 0}$ for~${i \in \{1, \ldots, n\}\setminus \{i_1\}}$ and ${\langle p_{i_1}, -p_{i_1}^*+p_{i_2}^* \rangle = -1}$.
    The properties
    \[\langle p_j, -p_{i_1}^*+p_{i_2}^* \rangle = \langle -\sum\limits_{i=1}^n\alpha_{ji}p_i, -p_{i_1}^*+p_{i_2}^* \rangle = \alpha_{ji_1}-\alpha_{ji_2}\geq 0,\]
    for $n < j \leq m$ are equivalent to the properties~${\alpha_{ji_1} \geq \alpha_{ji_2}}$ for all $n < j \leq m$, or to the property~${\rho_{i_1}\geq \rho_{i_2}}$.
  \end{proof}

  Let us prove  implication $(1) \Rightarrow (3)$.
  Suppose the converse that $\rho_{i_1} \geq \rho_{i_2}$ for some~${i_1 \neq i_2}$.
  By Lemma~\ref{compare}, the vector $-p^*_{i_1}+p^*_{i_2}$ is a Demazure root and it lies in $\mathfrak R_{i_1}$, a contradiction.

  \smallskip

  \begin{lmm}\label{simple}
    Let~$e$ be a Demazure root from the set~${\mathfrak R_i}$ and ${e \neq -p_i^*}$.
    Then there exists a Demazure root $e' \in \mathfrak R_i$ with $e'=-p_i^*+p_r^*$ for some $1\leq r \leq n$.
    Moreover, if $\langle p_r, e \rangle > 0$ for some~$r$, then the vector $-p_i^*+p_r^*$ is a Demazure root.
  \end{lmm}
  \begin{proof}
    Let~${e=-p_i^*+\sum\limits_{l=1,l\neq i}^n \epsilon_l p_l^*}$, where ${\epsilon_l = \langle p_l, e \rangle\geq 0}$.
    There exists an index~${r\neq i}$ such that~${\epsilon_r\neq 0}$.
    Let us define  a vector~${e'=-p_i^*+p_r^*}$.
    We have ${\langle p_j, e'\rangle \geq \langle p_j, e \rangle \geq 0}$ for all ${n < j \leq m}$. Thus, the element~$e'$ is a Demazure root.

  \end{proof}
  Let us prove  implication $(3) \Rightarrow (1)$.
  Let us assume the converse.
  By Lemma~\ref{simple}, if the set~$\mathfrak{R}_i$ is not equal to $\{-p_i^*\}$, then there exists $r$ such that $-p_i^*+p_r^* \in \mathfrak{R}_i$.
  By Lemma~\ref{compare}, we get $\rho_{i}\geq \rho_{r}$, a contradiction.
  \smallskip
  
  Now we prove  implication $(2) \Rightarrow (4)$.
  A maximal unipotent group~$U$ has dimension~$n$.
  So, the subgroup~$U$ is the only candidate for $\Ga^n$ up to conjugation.

  \smallskip
  
  Let us prove  implication $(4) \Rightarrow (3)$.
  Without loss of generality, let us assume that there exist  rays $\rho_1, \rho_2$ such that $\rho_2 \leq \rho_1$, where $\rho_1$ is a maximal ray.
  By Lemma~\ref{compare}, the vector~$-p_1^*+p_2^*$ is a Demazure root.
  Let us consider the number~${d=\max\{\epsilon : -p_1^*+\epsilon p_2^* \in \mathfrak R_1\}}$
  and take two ordered tuples of derivations:
  \begin{enumerate}
  \item[] $D^{\NA}=(D^{\NA}_1, \ldots, D^{\NA}_n) = (\partial_{-p_1^*}, \partial_{-p_2^*}, \partial_{-p_3^*},\ldots, \partial_{-p_n^*})$;
  \item[] $D^{\NNA}=(D^{\NNA}_1, \ldots, D^{\NNA}_n) =(\partial_{-p_1^*}, \partial_{-p_2^*}+\partial_{-p_1^*+dp_2^*}, \partial_{-p_3^*}\ldots, \partial_{-p_n^*})$.
  \end{enumerate}

  Our goal is to show that these tuples correspond to non-isomorphic additive actions.
  To prove this fact, we find some invariant varieties~$S_{V^{(q)}}(\mathcal C), q=1,2,$ for the above mentioned additive actions and prove that these invariants are non-isomorphic.
  The variety~$S_{V^{(q)}}(\mathcal C), q=1,2,$ is a subset of Cox ring~$R(X)$  connected with an additive action.

  \smallskip
  
  Firstly, we prove that the tuples $D^{\NA}$ and $D^{\NNA}$ correspond to additive actions.
  The derivation $\partial_{-p_2^*}+\partial_{-p_1^*+dp_2^*}$ is a sum of two locally nilpotent derivations of degree zero with respect to $\Cl(X)$-grading.
  Therefore, any derivation in the tuples $D^{(q)}, q=1,2,$ is a derivation of degree zero with respect to the $\Cl(X)$-grading.
  \begin{lmm}
    Derivations in the tuples $D^{\NA}$ and $D^{\NNA}$ are locally nilpotent.
  \end{lmm}
  \begin{proof}
    For any $1 \leq i \leq n$, the derivation $\partial_{-p_i^*}$ is locally nilpotent since it corresponds to a Demazure root.
    We should check that the derivation $\partial_{-p_2^*}+\partial_{-p_1^*+dp_2^*}$ is locally nilpotent.
    It easily follows from the following:
    \begin{align*}
        (\partial_{-p_2^*}+\partial_{-p_1^*+dp_2^*})(x_1)&\in \K[x_2, x_{n+1}, x_{n+2}, \ldots, x_m];\\
        (\partial_{-p_2^*}+\partial_{-p_1^*+dp_2^*})(x_2)&\in \K[x_{n+1}, x_{n+2}, \ldots, x_m];\\
      (\partial_{-p_2^*}+\partial_{-p_1^*+dp_2^*})(x_j)&=0 \text{, for } 2 < j \leq m.  
    \end{align*}

  \end{proof}
  \begin{lmm}
    Derivations in the tuple $D^{(q)}, q=1,2,$ pairwise commute.
  \end{lmm}
  \begin{proof}
    From Theorem~\ref{cc} we know that derivations in the tuple corresponding to the normalized additive action commute, as a result~${[\partial_{-p_i^*}, \partial_{-p_j^*}]=0}$.
    It remains to check that~${[\partial_{-p_2^*}+\partial_{-p_1^*+dp_2^*}, \partial_{-p_i^*}]=[\partial_{-p_1^*+dp_2^*}, \partial_{-p_i^*}]=0}$ if~${i \neq 2}$.
    This can be checked directly.
  \end{proof}
    By these lemmas, we get that the ordered tuples~$D^{(q)}, q=1,2,$ correspond to  actions~$a^{(q)}$ on the variety~$X$ by the group~$\Ga^n$.

  \begin{defn}
    Let us call an ordered tuple of locally nilpotent derivations~$D=(D_1, \ldots, D_n)$ \emph{triangular} if $D_ix_i \neq 0$ and $D_l x_i =0$ if $i > l$.
  \end{defn}
  It is easy to check that the tuples of derivations $D^{\NA}$ and $D^{\NNA}$ are triangular.
  \begin{lmm}
    The $\Ga^n$-action corresponding to a triangular tuple of commuting locally nilpotent derivations has an open orbit on the variety~$X$.
    Thus, a triangular ordered tuple of locally nilpotent derivations defines an additive action $\Ga^n \times X \to X$.
  \end{lmm}
  \begin{proof}
    
    We prove that there exists a point~${p =(x_1, \ldots, x_m)\in \widehat{X}\subset \overline{X}}$ such that ${\dimm (\Ga \times H_X) p=m}$.
    The Jacobian of the orbit morphism $\varphi_p \colon \Ga \times H_X \to \widehat{X}$ at the identity of the group~$\Ga\times H_X$ is equal to $\prod\limits_{i=1}^nD_ix_i\prod\limits_{j=n+1}^mx_j$.
    There exists a point~${p\in\widehat{X}}$, where the product~${\prod\limits_{i=1}^nD_ix_i\prod\limits_{j=n+1}^mx_j}$ is not zero.
    The dimension of the tangent space of the orbit~$(\Ga^n\times H_X)p$ at the point $p$ is equal to $\dim \overline{X}=m$.
    Thus, the orbit~$(\Ga^n\times H_X)p$ on the variety~$\overline{X}$ is open.
    Consequently, after factorization~$\pi_X: \widehat{X} \to X$ the orbit $\Ga^n\pi_X(p)$ is open on the variety~$X$ as well.
  \end{proof}
      Therefore, the action~$a^{(q)}, q=1,2,$ is an additive action.

      \smallskip
      
  Now we prove that actions corresponding to the tuples $D^{\NA}$ and $D^{\NNA}$ are non-isomorphic.
  Let us consider an equivalence relation on the set of rays~$\Sigma(1)$ determined by
    \[\rho_{i_1} \sim \rho_{i_2} \Longleftrightarrow \deg(x_{i_1}) = \deg(x_{i_2}) \text{ in } \Cl(X).\]
    This partitions $\Sigma(1)$ into disjoint subsets $\bigsqcup\limits_{i=1}^r\Sigma(1)_{i}$, where each subset $\Sigma(1)_i$ corresponds to a set of variables of the same degree $\omega_i$.
    Let $\mathcal{C}_i = \{f \in \mathcal R(X) : \deg(f) = \omega_i\}$ be the homogeneous component.
    Let us consider the vector space~$\mathcal{C}_i$ as an algebraic variety~$\A^{\dimm \mathcal{C}_i}$.
    We take the algebraic variety~$\mathcal{C} = \bigcup\limits_{i=1}^r \mathcal{C}_i$.
  We consider two vector spaces~${V^{\NA} = \{\sum\limits_{i=1}^n s_i D^{\NA}_i : s_i \in \K\}}$ and ${V^{\NNA} = \{\sum\limits_{i=1}^n s_i D^{\NNA}_i: s_i \in \K\}}$.
  For every element~${f \in \mathcal{C}}$, we regard the subspace~${\Ann_V f =\{v \in V:vf=0 \}}$
  of a space~$V$ of derivations.
  
  Let us consider the following sets:
  \[S_V(\mathcal{C}_i) = \{f\in \mathcal{C}_i : \dimm \Ann_V f \geq  \dimm V-1\},\]
  \[S_V(\mathcal{C}) = \{f\in \mathcal{C} : \dimm \Ann_V f \geq \dimm V-1\} = \bigcup\limits_{i=1}^r S_V(\mathcal{C}_i)\]
    \begin{lmm}
    The subset $S_V(\mathcal{C}_i)$ is a closed subvariety of the variety $\mathcal{C}_i$.
  \end{lmm}
  \begin{proof}
    We have the system of linear equations $vf = 0$, where $v \in V$ and $f \in \mathcal{C}_i$ is a certain fixed element.
    We  choose some bases  in~$V$ and in~$\mathcal{C}_i$.
    In these terms, the condition $\dim \Ann_f V \geq \dim V - 1$ means that the matrix of system of linear equations $vf = 0$ has rank less than 2 or, equivalently, every~$2\times 2$ submatrix is singular.
    Thus, $S_V(\mathcal{C}_i)$ is the subvariety of $\mathcal{C}_i$ defined by  equations~${\det(M)=0}$, where $M$ runs over all $2\times 2$ submatrices of the matrix of linear equation.
  \end{proof}
    By \cite[Theorem~3.2.6]{CLS} $T$-invariant divisors $D_1, \ldots, D_m$ on the variery~$X$ as well as the elements~$[D_1], \ldots, [D_m]\in \Cl(X)$ are canonical.
  Therefore, the degrees of the variables are canonical, since the degrees are equal to $[D_1], \ldots, [D_m]$.
  As a result if additive actions~${a^{(1)}, a^{(2)}}$ are isomorphic,
  then the varieties~${S_{V^{(1)}}(\mathcal{C}), S_{V^{(2)}}(\mathcal{C})}$ should be isomorphic.
  We are going to prove that varieties $S_{V^{\NA}}(\mathcal{C})$ and $S_{V^{\NNA}}(\mathcal{C})$ are not isomorphic.

  Without loss of generality, we suppose $x_1 \in \mathcal{C}_1$.
  
  \begin{lmm}
    For $i \neq 1$, we have $S_{V^{\NA}}(\mathcal{C}_i) = S_{V^{\NNA}}(\mathcal{C}_i)$.
  \end{lmm}
  \begin{proof}
    
    We prove that the derivation~$\partial_{-p_1^*+dp_2^*}$ is zero on the vector space~${\mathcal C_i, i>1}$.
    Assume the converse.
    We know that~${\partial_{-p_1^*+dp_2^*} = f\frac{\partial}{\partial x_1}}$, where~${f \in {R}(X)}$.
    It follows that the derivation~$\frac{\partial}{\partial x_1}$ is not zero on the vector space~$\mathcal{C}_i$.
    There exists a certain variable~${x_{l} \in \mathcal{C}_i}$, ${l \neq 1}$.
    By Corollary~\ref{homogeneousbasis}, we get~${\mathcal{C}_i = \{\lambda x_l + \sum_{e \in \mathfrak{R}_l}\lambda_e \partial_e(x_l) : \lambda, \lambda_e \in \K\}.}$
    Since $l \neq 1$ we obtain $\frac{\partial}{\partial x_1}(x_l) = 0$.
    Also, from the definition of Demazure root we get~${\partial_e(x_l) = x_1^{\langle p_1, e \rangle}g}$,
    ${g \in \K[x_2, \ldots, x_m]}$.
    Since the ray~$\rho_1$ is maximal, by Lemma~\ref{compare} no vector~$-p_l^* + p_1^*$ is a Demazure root.
    Then by Lemma~\ref{simple} the pairing~$\langle p_1, e \rangle$ is equal to zero
    and~$\frac{\partial}{\partial x_1}(\partial_e(x_l))=0$, a contradiction.

    As the derivation~$\partial_{-p_1^*+dp_2^*}$ is zero,   the tuples of derivations~$D^{\NA}$ and $D^{\NNA}$ are equal.
  \end{proof}
  By Corollary~\ref{homogeneousbasis}, for every element $f\in \mathcal{C}_1$, we can consider a representation~${f = \lambda x_1 + \sum\limits_{e\in \mathfrak R_1} \lambda_e \partial_e(x_1)}$ in the basis $x_1, \partial_e(x_1)$, where $e \in \mathfrak R_1$.

  Since $\partial_{-p_i^*}=\prod_{l=n+1}^mx_l^{\alpha_{il}}\frac{\partial}{\partial x_i},\, 1\leq i \leq n,$ and $\partial_{-p_1^*+dp_2^*}=x_2^d\prod_{l=n+1}^mx_l^{\alpha_{1l}-d\alpha_{2l}}\frac{\partial}{\partial x_1}$,
 the image $D_i^{(q)}(\lambda x_1+\sum_{e\in \mathfrak{R}_1}\lambda_e\partial_e(x_1)), 1\leq i \leq n$ and $q=1,2,$ belongs to $\spann_{e\in \mathfrak{R}_1}{\langle\partial_e(x_1)\rangle}$.
Let us introduce the coefficients~$\upsilon_{e,i}^{(q)}$:
  \[D_i^{(q)}\left(\lambda x_1+\sum_{e\in \mathfrak{R}_1}\lambda_e\partial_e(x_1)\right)=\sum_{e\in\mathfrak{R}_1}\upsilon_{e,i}^{(q)}\partial_e(x_1).\]
  \begin{lmm}\label{closed}
    The algebraic variety~$S_{V^{\NNA}}(\mathcal{C}_1)$ is the proper closed subset of the variety~$S_{V^{\NA}}(\mathcal{C}_1)$.
  \end{lmm}
  \begin{proof}
    We prove that $S_{V^{\NNA}}(\mathcal{C}_1)\subset\{\lambda=0\}$.
    For this, we choose ${2\times 2}$ submatrix~${L=\left(\begin{matrix}\upsilon^{\NNA}_{-p_1^*,1}&\upsilon^{\NNA}_{-p_1^*,2}\\\upsilon^{\NNA}_{-p_1^*+dp_2^*,1}&\upsilon^{\NNA}_{-p_1^*+dp_2^*,2}\end{matrix}\right)}$.
    We have
    \[\partial_{-p_1^*}(\lambda x_1 + \sum_{e\in \mathfrak{R}_1} \lambda_e \partial_e(x_1))=\lambda\partial_{-p_1^*}(x_1),\]
    \[(\partial_{-p_2^*}+\partial_{-p_1^*+dp_2^*})\left(\lambda x_1 + \sum_{e\in \mathfrak{R}_1} \lambda_e \partial_e(x_1)\right)=\lambda\partial_{-p_1^*+dp_2^*}(x_1)+\kern-0.8em\sum\limits_{\substack {e\in \mathfrak{R}_1\\ e+p_2^*\in \mathfrak{R}_1}}\kern-0.8em\left(\langle p_2, e \rangle+1\right)\lambda_{e+p_2^*}\partial_e(x_1).\]
    Since $d$ is maximal with $-p_2^*+dp_1^*$ being a Demazure root, we have $\upsilon^{\NNA}_{-p_1^*+dp_2^*,2}=\lambda$.
    The submatrix~$L$ is equal to~$\left(\begin{matrix}\lambda&\lambda_{-p_1^*+p_2^*}\\0&\lambda\end{matrix}\right)$.
    Thus, ${S_{V^{\NNA}}(\mathcal{C}_1)\subset\{\lambda=0\}}$.
    We know that~${\spann_{e\in\mathfrak{R}_1} {\partial_e(x_1)}\subset \Ker \partial_{-p_2^*+dp_1^*}}$.
    Therefore, if $\lambda=0$ then the systems of linear equations are the same for tuples $D^{\NA}$ and $D^{\NNA}$.
    This follows that \[S_{V^{\NNA}}(\mathcal{C}_1) = S_{V^{\NNA}}(\mathcal{C}_1)\cap \{\lambda=0\}= S_{V^{\NA}}(\mathcal{C}_1)\cap \{\lambda=0\}.\]

    Let us prove that~${S_{V^{\NA}}(\mathcal{C}_1)\not\subset \{\lambda=0\}}$.
    Since ${\sum s_i D_i^{\NA} (x_1)=s_1\partial_{-p_1^*}(x_1)}$ the point~${\lambda=1}$ and all ${\lambda_e=0}$ belongs to the variety~${S_{V^{\NA}}(\mathcal{C}_1)}$.
  \end{proof}

  By Lemma \ref{closed}, the varieties $S_{V^{\NA}}(\mathcal{C})$ and $S_{V^{\NNA}}(\mathcal{C})$ are not isomorphic.
  This completes the proof of impication $(1)\Rightarrow (3)$.
  So, Theorem~\ref{maintheorem} is proved.
  
\end{proof}

\section{Corollaries and examples}\label{example}
We preserve notation of the previous section.
The next corollary follows from Theorem~\ref{maintheorem}.
\begin{corollary}
  Let $X$ be a complete toric variety admitting an additive action.
  The following conditions are equivalent:
  \begin{enumerate}
  \item the dimension of a maximal unipotent subgroup of the automorphism group~$\Aut(X)$ is equal to the dimension of the variety~$X$;
  \item any additive action on~$X$ is isomorphic to the normalized additive action.
  \end{enumerate}

\end{corollary}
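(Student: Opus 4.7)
The plan is to reduce the corollary directly to the equivalence (2)$\Leftrightarrow$(4) of Theorem~\ref{maintheorem} by rewriting the hypothesis $\dim U = n$ in terms of the set $\mathfrak R^+$. The identity $\dim U = |\mathfrak R^+|$, recorded at the start of the proof of Theorem~\ref{maintheorem} via \cite[Proposition~4.3]{Cox}, lets us replace condition~(1) of the corollary by the cardinality statement $|\mathfrak R^+| = n$. So the task reduces to showing that $|\mathfrak R^+| = n$ is equivalent to the explicit list $\mathfrak R^+ = \{-p_1^*, \ldots, -p_n^*\}$, which is condition~(2) of Theorem~\ref{maintheorem}.

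For this equivalence I would combine three ingredients already established in the paper. First, by Proposition~\ref{selective}(2)+(3) one has $\mathfrak R^+ = \mathfrak S^+ \cup \mathfrak U \subseteq \bigcup_{i=1}^n \mathfrak R_i$. Second, the sets $\mathfrak R_1, \ldots, \mathfrak R_n$ are pairwise disjoint: the defining condition $\langle p_i, e\rangle = -1$ for $e \in \mathfrak R_i$ combined with Lemma~\ref{firstroot}'s conclusion $\langle p_{i'}, e\rangle \geq 0$ for $i' \neq i$ in $\{1,\ldots,n\}$ forbids simultaneous membership in two of them. Third, Lemma~\ref{firstroot} places $-p_i^*$ inside $\mathfrak R_i$, and these $n$ specific roots actually belong to $\mathfrak R^+$, because their associated one-parameter subgroups generate the normalized $\Ga^n$-action, which is a unipotent subgroup of $\Aut(X)$ that (after replacing $U$ by a conjugate if necessary) is contained in $U$. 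Consequently $|\mathfrak R^+ \cap \mathfrak R_i| \geq 1$ for every $1 \leq i \leq n$, whence $|\mathfrak R^+| \geq n$, with equality precisely when each intersection is the singleton $\{-p_i^*\}$, i.e., when $\mathfrak R^+ = \{-p_1^*, \ldots, -p_n^*\}$. Applying Theorem~\ref{maintheorem} then finishes the argument.

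The only step that requires a bit of care, and which I would flag as the main (though mild) obstacle, is justifying that the particular maximal unipotent $U$ used to define $\mathfrak R^+$---which depends on the regulator vector $u$ from Proposition~\ref{selective}(3)---can be chosen so that the roots $-p_i^*$ all lie on the positive side $\mathfrak S^+_u \cup \mathfrak U$. This is not a genuine difficulty: any two maximal unipotent subgroups of $\Aut(X)$ are conjugate, the quantity $|\mathfrak R^+|$ is an invariant of this conjugacy class, and conjugation transports the normalized $\Ga^n$ into any prescribed $U$; hence one may assume from the outset that $U$ contains the distinguished normalized $\Ga^n$, which gives the desired positivity of the $-p_i^*$.
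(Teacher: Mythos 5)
Your proposal is correct and follows the same route as the paper, whose entire proof is the one-line observation that $\dim U=|\mathfrak R^+|$ combined with the equivalence $(2)\Leftrightarrow(4)$ of Theorem~\ref{maintheorem}; you simply supply the details the paper leaves implicit (disjointness of the $\mathfrak R_i$ and the containment $\{-p_1^*,\ldots,-p_n^*\}\subseteq\mathfrak R^+$). For the last point there is a slightly more direct argument than your conjugacy one: if $-p_i^*$ is semisimple it cannot lie in $\mathfrak S_u^-$, since then $p_i^*$ would lie in $\mathfrak S_u^+\subset\bigcup_{l\le n}\mathfrak R_l$, contradicting $\langle p_l,p_i^*\rangle\ge 0$ for all $l\le n$.
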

\begin{proof}
The dimension of a maximal unipotent subgroup is equal to the size of the set~$\mathfrak{R}^+$.
\end{proof}
  
\begin{example}
 Let us consider the set of vectors $p_1, \ldots, p_5$ in $N=\Z^2$
  such that the vectors $p_1, p_2$ form a basis of $N$, ${p_{3}=-p_1+p_2}$, ${p_4=-2p_1-p_2}$ and ${p_{5}=-p_1-p_2}$.
  Let  $\rho_1, \ldots, \rho_{5} \subset N_{\Q}$ be the rays generated by the vectors~$p_1, \ldots, p_{5}$, respectively.
  Let us consider a complete toric variety~$X$ with the fan~$\Sigma$ such that~$\Sigma(1) = \{\rho_1, \ldots, \rho_{5}\}$.
  It can be computed directly that $\mathfrak{R}_1=\{-p_1^*, -p_1^*+p_2^*\}$ and $\mathfrak R_i=\emptyset, i\geq 2$.
  Therefore, a maximal unipotent subgroup of the group~$\Aut(X)$ has dimension~$2$, but there is no additive action on the variety~$X$ by Lemma~\ref{firstroot}.

\end{example}

Now let us recall the main result of~\cite{S} and explain the connection between this result and Theorem~\ref{maintheorem}.

\begin{defn}\label{defwide}
  Let us consider a complete two-dimensional fan $\Sigma$ that corresponds to a toric surface admitting an additive action.
  The primitive vectors on the rays in the fan~$\Sigma$ are equal to vectors ${p_1, p_2}$ and ${-\alpha_{j1}p_1-\alpha_{j2}p_2}$, ${2 < j \leq m}$ for some $\alpha_{ji} \geq 0, i=1,2$.
  Let us call a fan $\Sigma$ \emph{wide} if  there exist indices ${2 < j, j' \leq m}$ such that~${\alpha_{j1} > \alpha_{j2}}$ and ${\alpha_{j'1} < \alpha_{j'2}}$.
\end{defn}

\begin{theorem}{\cite[Theorem 3]{S}}\label{surface}
Let $X$ be a complete toric surface  admitting an additive action.
  Then an additive action on $X$ is unique up to isomorphism if and only if the fan $\Sigma$ is wide; otherwise, there exist preciesly two non-isomorphic additive actions, where one is normalized and the other is not.
\end{theorem}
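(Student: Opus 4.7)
Since Theorem~\ref{surface} is recalled from~\cite{S}, the task is to show how the equivalence ``wide iff unique'' follows from Theorem~\ref{maintheorem} of the present paper, while the count of exactly two actions in the non-wide case is inherited from the surface-specific classification in~\cite{S}. The engine is Theorem~\ref{maintheorem}, and the only new ingredient is a translation between Definition~\ref{defwide} and the preorder language of Section~\ref{draa}.

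First I would observe that for a two-dimensional complete fan $\Sigma$ admitting an additive action, the preorder on $\{\rho_1,\rho_2\}$ reads: $\rho_1 \leq \rho_2$ iff $\alpha_{j1} \leq \alpha_{j2}$ for every $j > 2$. Consequently $\rho_1$ and $\rho_2$ are incomparable precisely when there exist indices $j,j'$ with $\alpha_{j1} > \alpha_{j2}$ and $\alpha_{j'1} < \alpha_{j'2}$, which is literally the width condition of Definition~\ref{defwide}. Hence $\Sigma$ is wide if and only if the preorder is trivial. Combining this translation with the equivalence $(3) \Leftrightarrow (4)$ in Theorem~\ref{maintheorem} already gives the first half of the statement: $\Sigma$ is wide iff every additive action on $X$ is isomorphic to the normalized one.

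For the non-wide case I would appeal to the construction inside the proof of the implication $(4)\Rightarrow(3)$ of Theorem~\ref{maintheorem}. Assuming WLOG $\rho_2 \leq \rho_1$, the Demazure root $-p_1^*+dp_2^* \in \mathfrak R_1$ produces the ordered tuples $D^{\NA}$ (yielding the normalized action) and $D^{\NNA}$ (yielding a non-normalized action), and the invariants $S_{V^{\NA}}(\mathcal C) \neq S_{V^{\NNA}}(\mathcal C)$ witness that they are non-isomorphic. This gives two non-isomorphic additive actions on $X$, one normalized and the other not; the same argument handles the symmetric case $\rho_1 \leq \rho_2$.

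The remaining assertion --- that in the non-wide case there are at most two non-isomorphic additive actions --- is the main obstacle and is not a direct corollary of Theorem~\ref{maintheorem}. It requires enumerating all two-dimensional commutative subgroups of the maximal unipotent subgroup $U \subset \Aut(X)$ acting with an open orbit, modulo conjugation by the full automorphism group. For surfaces the root set $\mathfrak R^+$ has a very restricted combinatorial shape (essentially an arithmetic progression $-p_1^*, -p_1^*+p_2^*, \dots, -p_1^*+dp_2^*$ in $\mathfrak R_1$ together with $-p_2^*$ and possibly symmetric contributions when also $\rho_1 \leq \rho_2$), and this explicit description, exploited in~\cite[Theorem~3]{S}, is what rules out any third isomorphism class. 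I would therefore cite that classification to close the proof.
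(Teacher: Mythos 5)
Your proposal is correct and matches the paper's treatment: the paper recalls this theorem from \cite{S} without reproving it, and the only bridge it supplies is exactly the translation you give (for $n=2$, the fan is wide if and only if $\rho_1$ and $\rho_2$ are incomparable, i.e.\ the preorder is trivial, so the uniqueness half follows from the equivalence $(3)\Leftrightarrow(4)$ of Theorem~\ref{maintheorem}), which is precisely the content of the corollary following the statement. You also correctly identify that the ``precisely two'' count in the non-wide case is not a consequence of the present paper's results and must be imported from the surface classification in \cite{S}.
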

The following corollary of Theorem~\ref{maintheorem} explains a connection between Theorem~\ref{maintheorem} and Theorem~\ref{surface}.
\begin{corollary}
  Let $X$ be a complete toric variety admitting an additive action. The following conditions are equivalent:
  \begin{enumerate}
  \item any additive action is isomorphic to the normalized additive action;
  \item the image under the  projection along the coordinate plane~${\spann\{p_1, \ldots, \widehat{p_{l_1}}, \ldots, \widehat{p_{l_2}}, \ldots, p_n\}}$ of the system of rays $\Sigma(1)$ to the plane spanned vectors $p_{l_1}, p_{l_2}$ determines a wide fan for every~${1 \leq l_1\neq l_2 \leq n}$.

  \end{enumerate}
\end{corollary}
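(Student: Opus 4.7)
The plan is to view this corollary as a direct translation of the equivalence $(3)\Leftrightarrow(4)$ of Theorem~\ref{maintheorem} into two-dimensional language via Definition~\ref{defwide}.

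First I would describe the projection explicitly. Writing $\pi=\pi_{l_1,l_2}$ for the projection along $\spann\{p_i:1\le i\le n,\ i\neq l_1,l_2\}$ onto $\spann\{p_{l_1},p_{l_2}\}$, one has $\pi(p_{l_1})=p_{l_1}$, $\pi(p_{l_2})=p_{l_2}$, and for $n<j\le m$,
\[\pi(p_j)=-\alpha_{jl_1}p_{l_1}-\alpha_{jl_2}p_{l_2}.\]
Hence the projected primitive ray generators are $p_{l_1},p_{l_2}$ together with vectors lying in the negative octant of the rank-two sublattice $\Z p_{l_1}\oplus\Z p_{l_2}$. Completing this set to a complete two-dimensional fan, Proposition~\ref{ort} identifies it as the fan of a complete toric surface admitting an additive action, and the coefficient pairs $(\alpha_{jl_1},\alpha_{jl_2})$ play the role of $(\alpha_{j1},\alpha_{j2})$ from Definition~\ref{defwide}. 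Any non-primitivity is absorbed by dividing by $\gcd(\alpha_{jl_1},\alpha_{jl_2})$, which preserves the sign of $\alpha_{jl_1}-\alpha_{jl_2}$ and hence the wideness condition.

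Next I would unpack the two relevant conditions to the same inequality on the matrix $(\alpha_{ji})$. By Definition~\ref{defwide}, the projected fan is wide iff there exist $j,j'>n$ with $\alpha_{jl_1}>\alpha_{jl_2}$ and $\alpha_{j'l_1}<\alpha_{j'l_2}$. By the definition of the preorder $\le$ introduced at the end of Section~\ref{draa}, this is exactly the statement that $\rho_{l_1}$ and $\rho_{l_2}$ are incomparable. Running this equivalence over all unordered pairs $\{l_1,l_2\}\subset\{1,\ldots,n\}$, the preorder $\le$ is trivial on $\{\rho_1,\ldots,\rho_n\}$ if and only if every projected fan $\pi_{l_1,l_2}(\Sigma)$ is wide.

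To conclude, I would invoke the equivalence $(3)\Leftrightarrow(4)$ of Theorem~\ref{maintheorem} which identifies triviality of the preorder $\le$ with condition~(1) of the corollary. Combined with the previous step, this gives $(1)\Leftrightarrow(2)$. The only point requiring some care is the verification in Step~1 that the projected collection of rays genuinely defines a complete toric surface admitting an additive action: completeness is inherited from $\Sigma$ via surjectivity of $\pi$, and the "basis plus negative octant" structure required by Proposition~\ref{ort} is immediate from the explicit formulas. Everything else is a direct unpacking of definitions.
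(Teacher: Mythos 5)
Your proposal is correct and follows the paper's own argument: the paper likewise observes that wideness of the projected fan onto $\spann\{p_{l_1},p_{l_2}\}$ is exactly incomparability of $\rho_{l_1}$ and $\rho_{l_2}$ in the preorder, and then invokes the equivalence $(3)\Leftrightarrow(4)$ of Theorem~\ref{maintheorem}. You simply spell out the details (the explicit coefficients $\alpha_{jl_1},\alpha_{jl_2}$, the gcd normalization, completeness of the projected fan) that the paper leaves implicit.
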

\begin{proof}
  The image of the projection of the fan to the plane spanned by vectors $p_{l_1}, p_{l_2}$ is wide if and only if the rays $\rho_{l_1}$ and $\rho_{l_2}$ are incomparable.
  Thus, the corollary stems from  equivalence~$(3)\Leftrightarrow (4)$ of Theorem~\ref{maintheorem}.
\end{proof}
\begin{corollary}\label{weightedprojective}
  Let $X$ be a complete toric variety admitting an additive action.
  If we have~$m=n+1$ or, equivalently, $\rank \Cl(X)=1$, then there are at least two non-isomorphic additive actions.
\end{corollary}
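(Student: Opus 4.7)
The plan is to reduce the statement to the preorder criterion provided by Theorem~\ref{maintheorem}, specifically the equivalence of conditions $(3)$ and $(4)$: uniqueness of the additive action up to isomorphism holds if and only if the preorder $\leq$ on $\{\rho_1, \ldots, \rho_n\}$ is trivial. Thus it suffices to show that, under the hypothesis $m = n+1$ (equivalently $\rank \Cl(X) = 1$), this preorder is \emph{not} trivial, assuming $n \geq 2$ (the case $n=1$ reduces to $X = \P^1$, which lies outside the intended scope of the corollary).

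To carry this out, I would unpack the definition of the preorder in the present setting. By Proposition~\ref{ort}, the rays may be ordered so that $p_1, \ldots, p_n$ form a basis of $N$ and the remaining primitive vectors satisfy $p_j = -\sum_{i=1}^n \alpha_{ji} p_i$ with $\alpha_{ji} \in \Z_{\geq 0}$ for $n < j \leq m$. When $m = n+1$, there is a single extra ray $\rho_{n+1}$, and the preorder on $\{\rho_1, \ldots, \rho_n\}$ collapses to
\[
\rho_{i_1} \leq \rho_{i_2} \iff \alpha_{n+1,i_1} \leq \alpha_{n+1,i_2},
\]
which is a comparison of a single pair of integers.

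The key observation is that for any two indices $i_1 \neq i_2$, the integers $\alpha_{n+1,i_1}$ and $\alpha_{n+1,i_2}$ are automatically comparable: either one is less than or equal to the other, so $\rho_{i_1}$ and $\rho_{i_2}$ are comparable in the preorder. Since we are assuming $n \geq 2$, we may pick such a pair $i_1 \neq i_2$, and then by definition the preorder fails to be trivial (a trivial preorder requires every pair of distinct elements to be incomparable). Invoking the equivalence $(3) \Leftrightarrow (4)$ of Theorem~\ref{maintheorem}, we conclude that the normalized additive action on $X$ is not the unique additive action up to isomorphism, so there exist at least two non-isomorphic additive actions.

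There is essentially no obstacle here beyond setting up the definitions correctly; the corollary is an immediate structural consequence of the main theorem. The only mild care needed is to notice that in the Picard-rank-one situation, the preorder is forced to be a total preorder, hence always comparable and hence nontrivial as soon as $n \geq 2$.
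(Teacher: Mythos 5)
Your proof is correct and follows essentially the same route as the paper: reduce to the equivalence $(3)\Leftrightarrow(4)$ of Theorem~\ref{maintheorem} and note that when $m=n+1$ the preorder is the natural total order on the integers $\alpha_{n+1,1},\ldots,\alpha_{n+1,n}$, hence any two distinct rays are comparable and the preorder is not trivial. Your explicit caveat that $n\geq 2$ is needed (for $n=1$, i.e.\ $X=\P^1$, there is no pair of distinct rays and the preorder is vacuously trivial, so the conclusion fails) is a point the paper's one-line proof silently glosses over.
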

\begin{proof}
By definition, the preorder on the rays~${\rho_1, \ldots, \rho_n}$ is the same as the natural order on numbers~${\alpha_{n+1, 1}, \ldots, \alpha_{n+1, n}}$.
  Every two elements are comparable.
  Therefore, the preorder is not trivial.
\end{proof}
  Corollary~\ref{weightedprojective} covers the case of weighted projective spaces.
  By \cite[Proposition~2]{AR}, a weighted projective space~${\P(a_0, \ldots, a_n)}, {a_0 \leq a_1 \leq \ldots \leq a_n}$ admits an additive action if and only if~$a_0=1$.
  By this corollary, on a weighted projective space~$\P(1, a_1, \ldots, a_n)$ there are at least two non-isomorphic additive actions.
  \smallskip

  The final example shows that in the case $m=n+2$ an additive action can be unique.
\begin{example}
  Let us consider the set of vectors $p_1, \ldots, p_{n+2}$ in $N=\Z^n$
  such that the vectors $p_1, \ldots, p_n$ form a basis of $N$, ${p_{n+1}=-\sum_{i=1}^nip_i}$ and ${p_{n+2}=-\sum_{i=1}^n(n-i+1)p_i}$.
  Let us consider the rays $\rho_1, \ldots, \rho_{n+2} \subset N_{\Q}$ generated by $p_1, \ldots, p_{n+2}$.
  We consider a complete toric variety~$X$ with a fan $\Sigma$ such that $\Sigma(1) = \{\rho_1, \ldots, \rho_{n+2}\}$.
  By Theorem~\ref{maintheorem} an additive action on such a variety is unique.
\end{example}

\smallskip

\end{document}